\documentclass[12pt,reqno]{article}
\usepackage{diagbox}
\usepackage{amsfonts}
\usepackage{bbm}
\usepackage{bm}
\usepackage{pifont}
\usepackage{amsmath}
\usepackage{mathrsfs}
\usepackage{hyperref}
\usepackage{amssymb}
\usepackage{amsmath,color}
\usepackage{amsthm}
\usepackage{amstext}
\usepackage[numbers,sort&compress]{natbib}
\usepackage{amsopn}
\usepackage{graphicx}
\usepackage{multirow}
\usepackage{float}
\usepackage[left=3cm,right=3cm,top=3cm,bottom=3cm]{geometry}
\usepackage[english]{babel}
\allowdisplaybreaks
\usepackage{enumerate}

\newtheorem{theorem}{Theorem}[section]
\newtheorem{proposition}[theorem]{Proposition}
\newtheorem{lemma}[theorem]{Lemma}

\newtheorem{definition}[theorem]{Definition}
\newtheorem{assumption}[theorem]{Assumption}
\newtheorem{remark}[theorem]{Remark}

\newcommand{\frA}{\mathfrak{A}}
\let\Section=\section
\def\section{\setcounter{equation}{0}\Section}

\title{Exponential Stability of the Linear Semigroup and the Global Attractor for a Coupled Suspension Bridge System with Past History\footnote{This work is supported by NSFC (12671232).}}
\author{Jun Zhou\footnote{Email: jzhou@swu.edu.cn}\\\\
		{ School of Mathematics and Statistics, Southwest University},\\
		{ Chongqing 400715, People's Republic of China}}
\date{}

\begin{document}
\maketitle

\begin{abstract}
This paper is concerned with the long-time dynamics of a coupled nonlinear
suspension bridge system with past history, in which the deck and the main
cable interact through the nonlinear restoring force \(k[u-v]^{+}\). Building
on the well-posedness and the solution semigroup established in
\cite{Mukiawa2025}, we first show the exponential stability of the associated
linear semigroup by means of the Gearhart--Pr\"uss theorem together with a
detailed resolvent estimate. We then combine this exponential stability with
the local Lipschitz property of the nonlinearity, the fact that the
nonlinearity acts only on the elastic component, the smoothing of the elastic
component, and the exponential decay of the memory kernels to conclude that
the solution semigroup is quasi-stable in the sense of Chueshov--Lasiecka,
and therefore asymptotically smooth. Together with the dissipativity recalled
from \cite{Mukiawa2025}, this leads to a compact global attractor in the
natural energy space. In particular, our approach may be viewed as closing a
gap in the proof of asymptotic smoothness in \cite{Mukiawa2025}, where a
non-decaying initial-difference term was absorbed into an arbitrarily small
constant without justification.
\end{abstract}

\medskip
\noindent\textbf{Keywords:} Suspension bridge system; Past history; Exponential
stability of the linear semigroup; Global attractor; Quasi-stability;
Asymptotic smoothness; Gearhart--Pr\"uss theorem.

\medskip
\noindent\textbf{MSC (2020):} 35B41; 35Q74; 74K10; 35L70; 35R09; 37L30.

\section{Introduction}\label{sec1}

\noindent\textbf{Background and related works.}
Suspension bridges are among the most important civil engineering structures,
and understanding their structural behavior has attracted considerable
attention from researchers. Early contributions by McKenna and co-authors
\cite{Lazer1990,Glover1989,Lazer1987,McKenna1987,McKenna1990,McKenna1999,McKenna2001}
modeled and investigated the vertical vibrations of a suspension bridge
through the coupled system
\begin{equation}\label{eq:classical}
\begin{cases}
m_1u_{tt}+\mu_1u_{xxxx}+k[u-v]^++\gamma_1u_t=f,\\
m_2v_{tt}-\mu_2v_{xx}-k[u-v]^++\gamma_2v_t=g,
\end{cases}
\end{equation}
where \(u=u(x,t)\) is the vertical displacement of the bridge deck and
\(v=v(x,t)\) is the displacement of the main cable. The positive constants
\(m_1,m_2\) represent the mass per unit length of the bridge deck and cable,
respectively, \(k\) is the spring constant of the suspension cables,
\(\mu_1,\mu_2\) are the flexural rigidity of the deck and the coefficient of
tensile strength of the cable, \(\gamma_1,\gamma_2\) are damping coefficients,
and \(f,g\) are external forces. The nonlinear term \(k[u-v]^+\) with
\([u-v]^+=\max\{u-v,0\}\) describes the restoring force exerted by the
suspenders when the deck is pulled downward relative to the cable; it vanishes
when the deck lies above the cable, reflecting the fact that the suspenders can
only pull and not push. The first equation in \eqref{eq:classical} governs the
vibration of the bridge deck in the vertical plane, while the second equation
describes the vibration of the main cable from which the deck is suspended.
Typical boundary conditions are
\[
u(0,t)=u(\ell,t)=u_{xx}(0,t)=u_{xx}(\ell,t)=0,\qquad
v(0,t)=v(\ell,t)=0,
\]
which correspond to a simply supported beam for the deck and a pinned string
for the cable. For simplicity, in the sequel we set \(m_1=m_2=1\), and we
assume that the external forces are time-independent, i.e.,
\(f_i=f_i(x)\), \(i=1,2\), so that the system is autonomous; this is the
standard setting for the study of global attractors.

A number of authors have established results on well-posedness, asymptotic
stability, and global attractors for \eqref{eq:classical} and its variants; see,
e.g., \cite{Ma2009,Ma2005,An2002,Ahmed1998,Holubova2003,Malik2004,DellOro2015}.
For instance, Ma and Zhong \cite{Ma2009} considered the coupled system
\begin{equation}\label{eq:MaZhong}
\begin{cases}
u_{tt}+\mu_1u_{xxxx}+k[u-v]^++\gamma_1u_t+h_1(u)=f_1,\\
v_{tt}-\mu_2v_{xx}-k[u-v]^++\gamma_2v_t+h_2(v)=f_2,
\end{cases}
\end{equation}
and proved the existence of strong solutions and a global attractor under the
regularity assumptions \(h_1\in C^3(\mathbb{R},\mathbb{R})\),
\(h_2\in C^2(\mathbb{R},\mathbb{R})\), and \(f_1,f_2\in L^2(0,\ell)\).
Similar results were obtained in \cite{Ma2005,An2002} for related steady-state
and evolutionary problems. Dell'Oro et al. \cite{DellOro2015} studied the
asymptotic behavior of coupled linear systems modeling suspension bridges and
obtained decay estimates under suitable damping and structural conditions.
These works reveal that the nonlinear coupling \(k[u-v]^+\) plays a crucial
role in the long-time dynamics and that the analysis of global attractors
requires a careful treatment of the non-smooth nonlinearity.

When the vibration of the main cable is ignored, the main cable holding the
suspenders is replaced by a rigid frame, and the vertical displacement of the
suspension bridge is modeled by the single equation
\begin{equation}\label{eq:single}
u_{tt}+u_{xxxx}+k[u]^++\gamma u_t=f.
\end{equation}
For results related to \eqref{eq:single}, see
\cite{Litcanu2004,Ahmed1998,Bochicchio2010,Kang2016,Park2011,Zhong2007} and the
references therein. In particular, Bochicchio et al. \cite{Bochicchio2010}
investigated the long-term damped dynamics of an extensible suspension bridge,
while Kang \cite{Kang2016} studied the asymptotic behavior of a thermoelastic
suspension bridge equation with linear memory. Park and Kang \cite{Park2011}
proved the existence of global attractors for suspension bridge equations with
nonlinear damping, and Zhong et al. \cite{Zhong2007} established the existence
of strong solutions and global attractors for the suspension bridge equations.
Messaoudi et al. \cite{Messaoudi2016} considered a suspension bridge with
memory and partially hinged boundary conditions and proved the existence of a
global attractor. For recent results on the dynamics of beam or plate systems,
Mukiawa and Enyi \cite{Mukiawa2020,Enyi2020} investigated the well-posedness
and the dynamics for Timoshenko and laminated beam systems with Fourier's law
and infinite memory.
\medskip

\noindent\textbf{Motivation: a point that may deserve further clarification in Mukiawa--Messaoudi.}
In recent years, increasing attention has been paid to suspension bridge models
with memory effects, since the materials of the deck and cable often exhibit
viscoelastic behavior. Following the approach of Dafermos \cite{Dafermos1970},
the memory term can be reformulated by introducing the relative displacement
history
\[
\sigma(x,t,s)=u(x,t)-u(x,t-s),\qquad
\eta(x,t,s)=v(x,t)-v(x,t-s),
\]
which satisfies the transport equations
\[
\sigma_t+\sigma_s-u_t=0,\qquad
\eta_t+\eta_s-v_t=0.
\]
This formulation allows one to treat the infinite history as an additional
state variable and to work in an appropriate Hilbert space. Very recently,
Mukiawa and Messaoudi \cite{Mukiawa2025} studied a coupled nonlinear bridge
system with past history of the form
\[
\begin{cases}
u_{tt}+\gamma_1u_t+\mu_1u_{xxxx}+k[u-v]^+-\int_{-\infty}^{t}g_1(t-s)u_{xxxx}(x,s)\,ds+h_1(u)=f_1(x),\\
v_{tt}+\gamma_2v_t-\mu_2v_{xx}-k[u-v]^++\int_{-\infty}^{t}g_2(t-s)v_{xx}(x,s)\,ds+h_2(v)=f_2(x),
\end{cases}
\]
and established the well-posedness as well as the existence of a compact
global attractor in the natural energy space. It seems, however, that their
proof of asymptotic smoothness may contain a point that deserves further
clarification. More precisely, in
\cite[Proposition~4.1]{Mukiawa2025}, the authors derive the difference estimate
\begin{equation}\label{eq:Mukiawa424}
\|U^1(t)-U^2(t)\|_{\mathcal{H}}^2
\le c\|U_0^1-U_0^2\|_{\mathcal{H}}^2
+ c_{\mathcal{B}}\int_0^t\bigl(\|u^1(s)-u^2(s)\|^2+\|v^1(s)-v^2(s)\|^2\bigr)\,ds,
\end{equation}
where \(c_{\mathcal{B}}>0\) is a constant depending on the absorbing set
\(\mathcal{B}\), and then, in the passage from \eqref{eq:Mukiawa424} to
\cite[(4.25)]{Mukiawa2025}, they assert that for every \(\epsilon>0\) and
sufficiently large \(T>0\),
\begin{equation}\label{eq:Mukiawa425}
\|S(T)U_0^1-S(T)U_0^2\|_{\mathcal{H}}^2
\le \epsilon+\Upsilon_T(U_0^1,U_0^2),
\end{equation}
where \(\Upsilon_T\) is defined as in \cite[(4.26)]{Mukiawa2025}. This step
would appear to require some justification: the first term on the right-hand
side of \eqref{eq:Mukiawa424}, namely
\(c\|U_0^1-U_0^2\|_{\mathcal{H}}^2\), does not decay as \(T\to{+\infty}\),
and hence it is not clear how it could be absorbed into the arbitrarily small
constant \(\epsilon\). Indeed, for two trajectories starting from different
initial data in a bounded absorbing set, the initial difference need not be
small, and no decay factor seems to be available in \eqref{eq:Mukiawa424} to
control it. Moreover, the quantity \(\Upsilon_T\) in \eqref{eq:Mukiawa425}
contains only the integral term involving the differences
\(\|u^1-u^2\|\) and \(\|v^1-v^2\|\) over \([0,T]\); it does not contain the
initial difference \(c\|U_0^1-U_0^2\|_{\mathcal{H}}^2\). Thus the passage from
\eqref{eq:Mukiawa424} to \eqref{eq:Mukiawa425} appears to omit the
non-decaying initial-difference term, and this point would benefit from
further justification. To make this passage rigorous, one needs an exponential
decay estimate for the linear semigroup, which was not established in
\cite{Mukiawa2025}. To the best of our knowledge, this point has not been
addressed in the literature.

Motivated by the above discussion and in order to address the gap in
\cite[Proposition~4.1]{Mukiawa2025}, we consider the following nonlinear coupled
suspension bridge model with infinite history:
\begin{equation}\label{eq:model}
\begin{cases}
u_{tt}\!+\!\gamma_1 u_t\!+\!\mu_1 u_{xxxx}\!+\!k[u-v]^+
\!-\!\displaystyle\int_{-\infty}^{t}g_1(t\!-\!s)u_{xxxx}(x,s)\,ds\!+\!h_1(u)=f_1(x),\\[2mm]
v_{tt}+\gamma_2 v_t-\mu_2 v_{xx}-k[u-v]^+
+\displaystyle\int_{-\infty}^{t}g_2(t-s)v_{xx}(x,s)\,ds+h_2(v)=f_2(x),\\[2mm]
u(0,t)=u_{xx}(0,t)=u(\ell,t)=u_{xx}(\ell,t)=0,\\
v(0,t)=v(\ell,t)=0,\\
u(x,-s)=\varphi_0(x,s),\quad s>0,\\
v(x,-s)=\psi_0(x,s),\quad s>0,\\
u_t(x,0)=u_1(x),\quad v_t(x,0)=v_1(x),
\end{cases}
\end{equation}
where \((x,t)\in(0,\ell)\times\mathbb{R}_+\), \(\ell>0\) is a constant, the
parameters \(\gamma_i,\mu_i\), \(i=1,2\), and \(k\) are positive constants,
\(f_1,f_2\in L^2(0,\ell)\), and \([u-v]^+=\max\{u-v,0\}\). As explained above,
the external forces are assumed to be time-independent, so that the system is
autonomous.

Throughout this work, we impose the following assumptions on the nonlinear
functions \(h_1,h_2\) and the memory kernels \(g_1,g_2\), which are identical to
those adopted in \cite{Mukiawa2025}.

\begin{assumption}\label{assumh}
For each \(i=1,2\), let \(h_i:\mathbb{R}\to\mathbb{R}\) be a \(C^1\) function,
and define its primitive by
\[
H_i(s)=\int_0^s h_i(\tau)\,d\tau.
\]
We assume that there exist constants \(C_1,C_2>0\) such that
\begin{equation}\label{ass1.5}
\liminf_{|s|\to+\infty}\frac{H_i(s)}{s^2}\ge 0,~
\liminf_{|s|\to+\infty}\frac{s h_i(s)-C_1 H_i(s)}{s^2}\ge 0,~
h_i'(s)\ge -C_2,\quad \forall s\in\mathbb{R}.
\end{equation}
Condition \eqref{ass1.5} implies that for any \(\epsilon>0\), there exist
constants \(C_\epsilon,C_\epsilon'>0\) such that
\begin{align}
H_i(s)+\epsilon s^2&\ge -C_\epsilon,\quad \forall s\in\mathbb{R},\; i=1,2,\label{ass1.6}\\
s h_i(s)-C_1 H_i(s)+\epsilon s^2&\ge -C_\epsilon',\quad \forall s\in\mathbb{R},\; i=1,2.\label{ass1.7}
\end{align}
\end{assumption}

\begin{assumption}\label{assumg}
For each \(i=1,2\), the kernel \(g_i:[0,+\infty)\to(0,+\infty)\) is absolutely
continuous and satisfies
\begin{equation}\label{ass1.8}
\int_0^{+\infty} g_i(s)\,ds=\beta_i\in(0,\mu_i),\quad i=1,2,
\end{equation}
and there exists \(\lambda_i>0\), \(i=1,2\), such that for almost every
\(s\in\mathbb{R}^+\),
\begin{equation}\label{ass1.9}
g_i'(s)+\lambda_i g_i(s)\le 0,\quad i=1,2.
\end{equation}
\end{assumption}

The main purpose of this paper is twofold. First, we establish the
exponential stability of the linear semigroup associated with the linearized
system, which provides the compression factor needed in the quasi-stability
framework. Second, using this exponential stability together with the local
Lipschitz property of the nonlinearity, we give a rigorous proof of
quasi-stability in the sense of Chueshov--Lasiecka, and consequently of
asymptotic smoothness and the existence of a compact global attractor.

Compared with the existing literature, the contributions of the present work
may be summarized as follows.

\begin{enumerate}[(i)]
    \item We establish the exponential stability of the linear semigroup
    associated with the coupled suspension bridge system with past history,
    which, as far as we are aware, was not obtained in \cite{Mukiawa2025}.
    This provides the compression factor required in the quasi-stability
    framework of Chueshov--Lasiecka.
    \item We give a rigorous proof of quasi-stability and asymptotic
    smoothness, which, as discussed above, addresses the point in
    \cite[Proposition~4.1]{Mukiawa2025} that seemed to call for further
    justification. Our approach uses the exponential stability of the linear
    semigroup together with the structural property that the nonlinearity acts
    only on the elastic component, so that the nonlinear remainder is
    relatively compact.
    \item Our results also cover the coupled suspension bridge system with
    infinite memory and non-smooth nonlinearity \(k[u-v]^+\), and they require
    less regularity on the nonlinearities \(h_1,h_2\) than the earlier work
    \cite{Ma2009}: we assume \(h_i\in C^1(\mathbb{R},\mathbb{R})\) together
    with the growth and dissipativity conditions \eqref{ass1.5}, whereas
    \cite{Ma2009} requires \(h_1\in C^3(\mathbb{R},\mathbb{R})\) and
    \(h_2\in C^2(\mathbb{R},\mathbb{R})\).
\end{enumerate}

The remainder of this paper is organized as follows. In Section
\ref{sec:prelim}, we collect the functional setting, the well-posedness, and
the definitions and tools from the Chueshov--Lasiecka framework. In Section
\ref{sec4}, we prove the exponential stability of the linear semigroup via the
Gearhart--Pr\"uss theorem. In Section \ref{sec:global}, we prove
quasi-stability, asymptotic smoothness, and the existence of a compact global
attractor.

\section{Preliminaries}\label{sec:prelim}

In this section we collect the functional setting, the well-posedness of system
\eqref{eq:model}, and the definitions and tools from the Chueshov--Lasiecka
framework that will be used in the sequel. The well-posedness was established
in \cite{Mukiawa2025}; we recall it briefly for the reader's convenience.
\medskip

\noindent\textbf{Functional setting and well-posedness.} Let \(\Omega=(0,\ell)\), and let \((\cdot,\cdot)_\Omega\) and
\(\|\cdot\|_\Omega\) denote the inner product and norm in \(L^2(\Omega)\).
Define
\[
H_a^2(\Omega):=\bigl\{\omega\in H^2(\Omega):\omega(0)=\omega(\ell)=0\bigr\},
\]
and the weighted Hilbert spaces
\[
\mathcal{M}_{g_1}:=L^2_{g_1}\bigl(\mathbb{R}^+;H_a^2(\Omega)\bigr),\qquad
\mathcal{M}_{g_2}:=L^2_{g_2}\bigl(\mathbb{R}^+;H_0^1(\Omega)\bigr),
\]
equipped with the inner products
\[
(\sigma,\overline{\sigma})_{\mathcal{M}_{g_1}}
:=\int_0^{+\infty} g_1(s)\bigl(\sigma_{xx}(s),\overline{\sigma}_{xx}(s)\bigr)_\Omega\,ds,
\quad
(\eta,\overline{\eta})_{\mathcal{M}_{g_2}}
:=\int_0^{+\infty} g_2(s)\bigl(\eta_x(s),\overline{\eta}_x(s)\bigr)_\Omega\,ds,
\]
and the corresponding norms. Throughout, \(c_e>0\) and \(c_p>0\) denote the
embedding and Poincar\'e constants satisfying
\begin{equation}\label{eq:embedding}
\|u\|_{L^2(\Omega)}\le\sqrt{c_e}\,\|u_{xx}\|_{L^2(\Omega)},~\,u\in H_a^2(\Omega),
~~
\|v\|_{L^2(\Omega)}\le\sqrt{c_p}\,\|v_x\|_{L^2(\Omega)},~\,v\in H_0^1(\Omega).
\end{equation}
The state space is
\[
\mathcal{H}:=H_a^2(\Omega)\times L^2(\Omega)\times \mathcal{M}_{g_1}
\times H_0^1(\Omega)\times L^2(\Omega)\times\mathcal{M}_{g_2},
\]
which is a Hilbert space equipped with the inner product
\begin{align*}
(U,\overline{U})_{\mathcal{H}}
= \bar{\mu}_1 (u_{xx},\overline{u}_{xx})_\Omega
+ (\psi,\overline{\psi})_\Omega
+ (\sigma,\overline{\sigma})_{\mathcal{M}_{g_1}}\quad + \bar{\mu}_2 (v_x,\overline{v}_x)_\Omega
+ (\phi,\overline{\phi})_\Omega
+ (\eta,\overline{\eta})_{\mathcal{M}_{g_2}}.
\end{align*}

Introduce the relative displacements
\[
\sigma=\sigma(x,t,s):=u(x,t)-u(x,t-s),\qquad
\eta=\eta(x,t,s):=v(x,t)-v(x,t-s),
\]
and set
\begin{equation}\label{barmu}
\bar{\mu}_i=\mu_i-\int_0^{+\infty}g_i(s)\,ds>0,\quad i=1,2.
\end{equation}
Then system \eqref{eq:model} can be rewritten as
\begin{equation}\label{model}
\begin{cases}
u_{tt}+\gamma_1 u_t+\bar{\mu}_1 u_{xxxx}+k[u-v]^+
+\displaystyle\int_0^{+\infty}g_1(s)\sigma_{xxxx}(s)\,ds+h_1(u)=f_1(x),\\[2mm]
\sigma_t+\sigma_s-u_t=0,\\[2mm]
v_{tt}+\gamma_2 v_t-\bar{\mu}_2 v_{xx}-k[u-v]^+
-\displaystyle\int_0^{+\infty}g_2(s)\eta_{xx}(s)\,ds+h_2(v)=f_2(x),\\[2mm]
\eta_t+\eta_s-v_t=0,
\end{cases}
\end{equation}
with initial data
\begin{equation}\label{eq:init}
\begin{cases}
u(x,0)=u_0(x),\quad u_t(x,0)=u_1(x), & x\in(0,\ell),\\[1mm]
\sigma^0(x,s)=u_0(x)-\varphi_0(x,s):=\sigma_0(x,s), & x\in(0,\ell),\; s>0,\\[1mm]
v(x,0)=v_0(x),\quad v_t(x,0)=v_1(x), & x\in(0,\ell),\\[1mm]
\eta^0(x,s)=v_0(x)-\psi_0(x,s):=\eta_0(x,s), & x\in(0,\ell),\; s>0,
\end{cases}
\end{equation}
and boundary conditions
\begin{equation}\label{eq:bound}
\begin{cases}
u(0,t)=u_{xx}(0,t)=u(\ell,t)=u_{xx}(\ell,t)=0, & t\ge 0,\\[1mm]
v(0,t)=v(\ell,t)=0, & t\ge 0,\\[1mm]
\sigma(0,t,s)=\sigma_{xx}(0,t,s)=\sigma(\ell,t,s)=\sigma_{xx}(\ell,t,s)=0, & t\ge 0,\; s>0,\\[1mm]
\eta(0,t,s)=\eta(\ell,t,s)=0, & t\ge 0,\; s>0.
\end{cases}
\end{equation}

For \(U=(u,\psi,\sigma,v,\phi,\eta)\in\mathcal{D}(\mathcal{A})\), the linear
operator \(\mathcal{A}\) and the nonlinearity \(F(U)\) are defined by
\begin{equation}\label{A}
\mathcal{A} U :=
\begin{pmatrix}
-\psi \\[1mm]
\gamma_1 \psi + \bar{\mu}_1 u_{xxxx}
+ \displaystyle\int_0^{+\infty} g_1(s) \sigma_{xxxx}(s) \, ds \\[2mm]
\sigma_s - \psi \\[1mm]
-\phi \\[1mm]
\gamma_2 \phi - \bar{\mu}_2 v_{xx}
- \displaystyle\int_0^{+\infty} g_2(s) \eta_{xx}(s) \, ds \\[2mm]
\eta_s - \phi
\end{pmatrix},
\end{equation}
and
\begin{equation}\label{F}
F(U) :=
\begin{pmatrix}
0 \\[1mm]
-k[u-v]^+ - h_1(u) + f_1 \\[1mm]
0 \\[1mm]
0 \\[1mm]
k[u-v]^+ - h_2(v) + f_2 \\[1mm]
0
\end{pmatrix}.
\end{equation}
The domain of \(\mathcal{A}\) is
\begin{align*}
\mathcal{D}(\mathcal{A})
:= \Big\{ & U=(u,\psi,\sigma,v,\phi,\eta)\in\mathcal{H} \,\Big|\,
\psi\in H_a^2(\Omega),\quad \phi\in H_0^1(\Omega),\\
&\sigma\in\mathcal{D}(\mathcal{M}_{g_1}),\quad
\eta\in\mathcal{D}(\mathcal{M}_{g_2}),\\
&\bar{\mu}_1 u + \int_0^{+\infty} g_1(s)\sigma(s)\,ds
\in H^4(\Omega)\cap H_a^2(\Omega),\\
&\bar{\mu}_2 v + \int_0^{+\infty} g_2(s)\eta(s)\,ds
\in H^2(\Omega)\cap H_0^1(\Omega),\\
&\bar{\mu}_1 u_{xx}(0)
+ \int_0^{+\infty} g_1(s)\sigma_{xx}(0,s)\,ds = 0,\\
&\bar{\mu}_1 u_{xx}(\ell)
+ \int_0^{+\infty} g_1(s)\sigma_{xx}(\ell,s)\,ds = 0 \Big\},
\end{align*}
where
\[
\mathcal{D}(\mathcal{M}_{g_i}):=\bigl\{\sigma\in\mathcal{M}_{g_i} : \sigma_s\in\mathcal{M}_{g_i}\bigr\},\qquad i=1,2.
\]
Then the semigroup formulation of system \eqref{model}--\eqref{eq:init} is
\begin{equation}\label{model-abstract}
\begin{cases}
U_t + \mathcal{A} U = F(U), & t>0,\\
U(0)=U_0:=(u_0,u_1,\sigma_0,v_0,v_1,\eta_0),
\end{cases}
\end{equation}
with \(U=(u,\psi,\sigma,v,\phi,\eta)\), \(\psi:=u_t\), \(\phi:=v_t\).

As shown in \cite[Section~3]{Mukiawa2025}, the operator \(\mathcal{A}\) is
maximal monotone and the nonlinearity \(F\) is locally Lipschitz on
\(\mathcal{H}\). Hence, by the L\"umer--Phillips theorem for the linear part
\(-\mathcal{A}\), which generates a \(C_0\)-semigroup of contractions on
\(\mathcal{H}\), combined with the standard local existence and uniqueness
theory for Lipschitz perturbations of semilinear equations, for every
\(U_0\in\mathcal{H}\) there exists a unique global mild solution
\[
U\in C([0,+\infty);\mathcal{H})
\]
of \eqref{model-abstract}, so the solution semigroup
\[
S(t):\mathcal{H}\to\mathcal{H},\qquad S(t)U_0:=U(t),\quad t\ge0,
\]
is well defined. Since these facts are already established in
\cite{Mukiawa2025}, we shall use them directly in the sequel without further
proof.
\medskip

\noindent\textbf{Global attractors: definition.} For the reader's convenience, we recall the standard definition of a global
attractor.

\begin{definition}[Global attractor]\label{def:attractor}
Let \(X\) be a Banach space and let \(\{S(t)\}_{t\ge0}\) be a continuous
semigroup on \(X\). A set \(\frA\subset X\) is called a \emph{global attractor}
for \(S(t)\) if
\begin{enumerate}[(i)]
    \item \(\frA\) is compact in \(X\);
    \item \(\frA\) is invariant under \(S(t)\), i.e., \(S(t)\frA=\frA\) for all
    \(t\ge0\);
    \item \(\frA\) attracts every bounded subset of \(X\), i.e., for every
    bounded set \(B\subset X\),
    \[
    \lim_{t\to+\infty}d_X\bigl(S(t)B,\frA\bigr)=0,
    \]
    where \(d_X(A,B)=\sup_{a\in A}\inf_{b\in B}\|a-b\|_X\) denotes the
    Hausdorff semi-distance.
\end{enumerate}
\end{definition}

\medskip

\noindent\textbf{Dissipativity and absorbing set.} The energy functional associated with \eqref{model} is
\begin{equation}\label{eq:energy}
\begin{split}
E(t)=&\frac12\Bigl[\|u_t\|^2+\bar{\mu}_1\|u_{xx}\|^2+\|\sigma\|_{\mathcal{M}_{g_1}}^2
+\|v_t\|^2+\bar{\mu}_2\|v_x\|^2+\|\eta\|_{\mathcal{M}_{g_2}}^2\Bigr]\\
&+\frac{k}{2}\|[u-v]^+\|^2+(H_1(u),1)+(H_2(v),1)-(f_1,u)-(f_2,v).
\end{split}
\end{equation}
Under Assumptions \ref{assumh} and \ref{assumg}, it is shown in
\cite[Lemma~4.2]{Mukiawa2025} that \(S(t)\) is dissipative: there exists
\(R_0>0\) such that for every bounded set \(B\subset\mathcal{H}\) there is
\(t_0=t_0(B)\ge0\) with
\begin{equation}\label{eq:absorbing}
S(t)B\subset \mathcal{B}_0:=\{U\in\mathcal{H}:\|U\|_{\mathcal{H}}\le R_0\},
\quad \forall t\ge t_0.
\end{equation}
We shall use \eqref{eq:absorbing} in the sequel and refer to
\cite{Mukiawa2025} for the proof. We also recall that \(F\) is locally
Lipschitz on \(\mathcal{H}\); see \cite[Lemma~3.4]{Mukiawa2025}. More
precisely, for every \(R>0\) there exists \(L_R>0\) such that
\begin{equation}\label{eq:local-Lipschitz}
\|F(U)-F(V)\|_{\mathcal{H}}\le L_R\|U-V\|_{\mathcal{H}},
\qquad
\|U\|_{\mathcal{H}},\|V\|_{\mathcal{H}}\le R.
\end{equation}
Moreover, \(F\) maps bounded sets of \(\mathcal{H}\) into bounded sets of
\(\mathcal{H}\). Finally, on the absorbing ball \(\mathcal{B}_0\) the energy
functional \eqref{eq:energy} is equivalent to the norm of \(\mathcal{H}\) up to
an additive constant; that is, there exist constants \(c_1,c_2>0\) and
\(C_0\ge0\), depending only on the parameters and on \(R_0\), such that
\begin{equation}\label{eq:energy-equivalent}
c_1\|U\|_{\mathcal{H}}^2-C_0\le E(t)\le c_2\|U\|_{\mathcal{H}}^2+C_0,
\qquad U\in\mathcal{B}_0.
\end{equation}
Since all the estimates below are performed on the bounded absorbing ball, the
additive constant \(C_0\) plays no role, and we shall use
\eqref{eq:energy-equivalent} without further mention.

\begin{remark}\label{rem:B0-bounded}
The absorbing ball \(\mathcal{B}_0\) is bounded in \(\mathcal{H}\). Consequently,
there exists \(R_1>R_0\) such that the closed ball
\(\mathcal{B}_1:=\{U\in\mathcal{H}:\|U\|_{\mathcal{H}}\le R_1\}\) contains
\(\mathcal{B}_0\) in its interior. Since \(F\) is locally Lipschitz, there exists
a constant \(L_1>0\), depending only on \(R_1\), such that
\[
\|F(U)-F(V)\|_{\mathcal{H}}\le L_1\|U-V\|_{\mathcal{H}},
\qquad
U,V\in\mathcal{B}_1.
\]
Moreover, \(F(\mathcal{B}_1)\) is bounded in \(\mathcal{H}\). We shall use these
facts repeatedly below.
\end{remark}

\medskip

\noindent\textbf{Quasi-stability and asymptotic smoothness.} We recall the notions of quasi-stability and asymptotic smoothness in the form
that will be used below. We employ the squared-norm formulation of
quasi-stability, which is the standard one in the Chueshov--Lasiecka framework.

\begin{definition}[Quasi-stability]\label{def:qstab}
Let \(\mathcal{H}\) be a Hilbert space, let \(\{S(t)\}_{t\ge0}\) be a continuous
semigroup on \(\mathcal{H}\), and let \(B\subset\mathcal{H}\) be a bounded set.
We say that \(S(t)\) is \emph{quasi-stable} on \(B\) if there exist \(T>0\),
\(q\in[0,1)\), and a relatively compact set \(\mathcal{K}\subset\mathcal{H}\)
such that
\begin{equation}\label{eq:qstab-ineq}
\|S(T)U_0-S(T)V_0\|_{\mathcal{H}}^2
\le q^2\|U_0-V_0\|_{\mathcal{H}}^2
+\|K_T(U_0)-K_T(V_0)\|_{\mathcal{H}}^2,
\qquad\forall U_0,V_0\in B,
\end{equation}
for some map \(K_T:B\to\mathcal{K}\). Here \(\mathcal{K}\) is a relatively
compact subset of \(\mathcal{H}\) containing \(K_T(B)\).
\end{definition}

\begin{definition}[Asymptotic smoothness]\label{def:asym-smooth}
Let \(X\) be a Banach space and let \(\{S(t)\}_{t\ge0}\) be a continuous
semigroup on \(X\). We say that \(S(t)\) is \emph{asymptotically smooth} if
the following two conditions hold:
\begin{enumerate}[(i)]
    \item \(S(t)\) possesses a bounded absorbing set \(B\subset X\), i.e.,
    for every bounded set \(B_0\subset X\) there exists \(t_0=t_0(B_0)\ge0\)
    such that
    \[
    S(t)B_0\subset B,\qquad \forall t\ge t_0;
    \]
    \item for every bounded set \(B_0\subset X\) such that \(S(t)B_0\subset B_0\)
    for all \(t\ge0\), the \(\omega\)-limit set \(\omega(B_0)\) is
    nonempty, compact, and invariant under \(S(t)\), and
    \[
    \lim_{t\to+\infty}d_X\bigl(S(t)B_0,\omega(B_0)\bigr)=0,
    \]
    where \(d_X(A,C)=\sup_{a\in A}\inf_{c\in C}\|a-c\|_X\) denotes the
    Hausdorff semi-distance.
\end{enumerate}
\end{definition}

Equivalently, \(S(t)\) is asymptotically smooth if it is dissipative and every
bounded positively invariant set has a compact \(\omega\)-limit set which
attracts the set. In the framework of Chueshov--Lasiecka, a sufficient
condition for asymptotic smoothness is the existence of a bounded absorbing
set together with the quasi-stability inequality \eqref{eq:qstab-ineq}; this
is precisely the content of the following theorem.

\begin{theorem}[Chueshov--Lasiecka; see {\cite[Theorem~7.9.4]{Chueshov2008}}]
\label{thm:CL-asym}
Let \(\mathcal{H}\) be a Hilbert space and let \(\{S(t)\}_{t\ge0}\) be a
continuous semigroup on \(\mathcal{H}\) possessing a bounded absorbing set
\(B\). Suppose that \(S(t)\) is quasi-stable on \(B\) in the sense of
Definition \ref{def:qstab}. Then \(S(t)\) is asymptotically smooth on
\(\mathcal{H}\) in the sense of Definition \ref{def:asym-smooth}.
\end{theorem}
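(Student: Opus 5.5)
The plan is to go through Kuratowski's measure of noncompactness, which I denote \(\alpha(D)\) for a bounded set \(D\subset\mathcal{H}\). It is defined as the infimum of those \(d>0\) such that \(D\) admits a finite cover by sets of diameter at most \(d\). Condition (i) of Definition~\ref{def:asym-smooth} is exactly the hypothesis that \(B\) is a bounded absorbing set, so only (ii) needs proof.

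\textbf{Step 1: a contraction estimate for \(\alpha\).} The first step is the inequality
\[
\alpha(S(T)D)\le q\,\alpha(D)\qquad\text{for every } D\subset B,
\]
where \(T\), \(q\) and \(K_T\) are as in Definition~\ref{def:qstab}. Fix \(\varepsilon>0\) and cover \(D\) by finitely many sets \(D_1,\dots,D_m\) of diameter at most \(\alpha(D)+\varepsilon\). Since \(K_T(D)\subset\mathcal{K}\) is relatively compact, it is covered by finitely many balls \(O_1,\dots,O_N\) of radius \(\varepsilon\). Put \(D_{jk}=D_j\cap K_T^{-1}(O_k)\). For \(U_0,V_0\in D_{jk}\), the inequality \eqref{eq:qstab-ineq} gives
\[
\|S(T)U_0-S(T)V_0\|_{\mathcal{H}}^2\le q^2(\alpha(D)+\varepsilon)^2+4\varepsilon^2 .
\]
Hence the sets \(S(T)D_{jk}\) form a finite cover of \(S(T)D\) with diameters at most \(\sqrt{q^2(\alpha(D)+\varepsilon)^2+4\varepsilon^2}\). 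Letting \(\varepsilon\to0\) yields the claim.

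\textbf{Step 2: \(\alpha(S(t)B_0)\to0\).} Let \(B_0\) be bounded with \(S(t)B_0\subset B_0\) for all \(t\ge0\), and choose \(t_0\) with \(S(t)B_0\subset B\) for \(t\ge t_0\). Set \(D_n:=S(t_0+nT)B_0\). Then \(D_n\subset B\) and \(D_{n+1}=S(T)D_n\), so Step 1 applies inductively and gives
\[
\alpha(D_n)\le q^n\alpha(D_0)\le q^n\,\mathrm{diam}(B_0).
\]
For \(t\ge t_0+nT\), positive invariance gives \(S(t)B_0=S(t_0+nT)S(t-t_0-nT)B_0\subset D_n\). Hence \(\alpha(S(t)B_0)\to0\) as \(t\to+\infty\). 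Positive invariance also yields
\[
\gamma_t:=\bigcup_{s\ge t}S(s)B_0=S(t)\bigcup_{\tau\ge0}S(\tau)B_0\subset S(t)B_0,
\]
and therefore \(\alpha(\gamma_t)\to0\) as well.

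\textbf{Step 3: the \(\omega\)-limit set.} Write \(\omega(B_0)=\bigcap_{t\ge0}\overline{\gamma_t}\). The sets \(\overline{\gamma_t}\) are nonempty, closed and decreasing in \(t\), and \(\alpha(\overline{\gamma_t})=\alpha(\gamma_t)\to0\). Kuratowski's intersection theorem then shows that \(\omega(B_0)\) is nonempty and compact, and that \(d_{\mathcal{H}}(\overline{\gamma_t},\omega(B_0))\to0\). Since \(S(t)B_0\subset\gamma_t\), this gives the attraction property.

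Invariance is shown by the standard sequential argument, using the characterization that \(y\in\omega(B_0)\) if and only if \(y=\lim S(t_n)x_n\) with \(x_n\in B_0\) and \(t_n\to\infty\).
\begin{itemize}
\item For the inclusion \(S(t)\omega\subset\omega\), apply continuity of \(S(t)\) to such a sequence.
\item For the reverse inclusion \(\omega\subset S(t)\omega\), take \(y=\lim S(t_n)x_n\) and note that \(S(t_n-t)x_n\in\gamma_{t_n-t}\). By Step 2 this sequence is precompact, since \(\alpha(\gamma_{t_n-t})\to0\). Extract a convergent subsequence with limit \(z\in\omega(B_0)\); then continuity gives \(S(t)z=y\).
\end{itemize}

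\textbf{Main obstacle.} The delicate step is Step 1 combined with its iteration. Quasi-stability is only assumed on the absorbing set \(B\), so it has to be applied to sets that remain inside \(B\). This is why the iteration starts at \(t_0\), where positive invariance of \(B_0\) together with absorption guarantees \(D_n\subset B\) for all \(n\). The compact term \(K_T\) enters only through the finite \(\varepsilon\)-net, which is what makes \(\alpha\), rather than the diameter, the right quantity to contract.
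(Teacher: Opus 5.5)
Your argument is correct, and it follows a different route from the paper only in the sense that the paper gives no proof at all. The paper imports the statement from Chueshov--Lasiecka and applies it directly in Proposition \ref{prop:asym-smooth}. You instead prove it through Kuratowski's measure of noncompactness:
\begin{enumerate}[(i)]
\item absorbing the compact term \(K_T\) into a finite \(\varepsilon\)-net gives \(\alpha(S(T)D)\le q\,\alpha(D)\) for \(D\subset B\);
\item iterating from the absorption time \(t_0\) gives \(\alpha(S(t)B_0)\to0\);
\item Kuratowski's intersection theorem together with the usual sequential argument then yields a nonempty, compact, invariant \(\omega(B_0)\) that attracts \(B_0\).
\end{enumerate}
The delicate points all check out. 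Intersecting the cover with \(K_T^{-1}(O_k)\subset B\) keeps every point in the domain of \(K_T\). The squared form of \eqref{eq:qstab-ineq} still produces the linear factor \(q\) once \(\varepsilon\to0\). The sequence \(\{S(t_n-t)x_n\}\) is precompact because every tail lies in some \(\gamma_\tau\) with \(\alpha(\gamma_\tau)\) arbitrarily small.

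What your approach gains over the citation is that it verifies the implication for exactly the notion used in this paper. Here quasi-stability is required only at a single time \(T\) and only on the absorbing set \(B\), with no forward invariance of \(B\) assumed; your shift of the iteration to \(t_0\) handles this. It is also phrased with an abstract map \(K_T\) into a relatively compact set. Chueshov--Lasiecka's estimates, by contrast, are typically formulated with a compact seminorm of the displacement difference. Invoking their theorem for Definition \ref{def:qstab} therefore tacitly requires a translation step, which your self-contained proof makes unnecessary. The citation's advantage is access to their stronger machinery, for example finite fractal dimension of the attractor. That machinery, however, needs their more specific form of the estimate and would not follow from Definition \ref{def:qstab} anyway.
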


\section{Exponential Stability}\label{sec4}

In this section we prove the exponential stability of the linear semigroup
\(\{e^{-\mathcal{A}t}\}_{t\ge0}\) generated by \(-\mathcal{A}\). Our strategy
is based on the Gearhart--Pr\"uss theorem, which reduces the proof to the
verification of two conditions on the resolvent of \(\mathcal{A}\). We first
recall the Gearhart--Pr\"uss theorem and establish the monotonicity of
\(\mathcal{A}\), which is needed both for the contraction property of the
linear semigroup and for the description of the approximate point spectrum on
the imaginary axis. Then we verify that the imaginary axis is contained in the
resolvent set of \(\mathcal{A}\), by a contradiction argument based on a Weyl
sequence. Finally, we establish the uniform boundedness of the resolvent on
the imaginary axis by three resolvent estimates. Throughout this section, all
function spaces are understood as complexified Hilbert spaces, and the inner
product is conjugate-linear in the second variable; since all estimates below
are ultimately real-valued, we shall freely take real parts without further
mention.

We first state the main result of this section.

\begin{theorem}[Exponential stability]\label{Exponential stability-precise}
Under Assumption \ref{assumg}, the linear semigroup
\(\{e^{-\mathcal{A}t}\}_{t\ge0}\) generated by \(-\mathcal{A}\), where
\(\mathcal{A}\) is the linear operator defined in \eqref{A}, is exponentially
stable; that is, there exist constants \(\alpha\ge 1\) and \(\beta>0\) such
that
\[
\|e^{-\mathcal{A}t}\|_{\mathcal{L}(\mathcal{H}\to\mathcal{H})}
\le \alpha e^{-\beta t},\quad \forall t\ge0.
\]
\end{theorem}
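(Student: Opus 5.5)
We will apply the Gearhart--Pr\"uss theorem. Since \(-\mathcal{A}\) generates a \(C_0\)-semigroup of contractions on the Hilbert space \(\mathcal{H}\), exponential stability is equivalent to two conditions:
\[
i\mathbb{R}\subset\rho(\mathcal{A})
\quad\text{and}\quad
\sup_{\lambda\in\mathbb{R}}\|(i\lambda I+\mathcal{A})^{-1}\|_{\mathcal{L}(\mathcal{H})}<+\infty .
\]
(The sign convention is fixed by writing the resolvent equation as \(i\lambda U+\mathcal{A}U=F\).)

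\textbf{Dissipation identity.} We first record the identity for \(\mathrm{Re}(\mathcal{A}U,U)_{\mathcal{H}}\). Integrating by parts in \(x\) and in \(s\), using \(g_i'\le -\lambda_i g_i\) from \eqref{ass1.9}, gives
\[
\mathrm{Re}(\mathcal{A}U,U)_{\mathcal{H}}
=\gamma_1\|\psi\|^2+\gamma_2\|\phi\|^2
-\frac12\int_0^{+\infty}g_1'(s)\|\sigma_{xx}(s)\|^2ds
-\frac12\int_0^{+\infty}g_2'(s)\|\eta_x(s)\|^2ds,
\]
and therefore
\[
\mathrm{Re}(\mathcal{A}U,U)_{\mathcal{H}}
\ge \gamma_1\|\psi\|^2+\gamma_2\|\phi\|^2
+\frac{\lambda_1}{2}\|\sigma\|_{\mathcal{M}_{g_1}}^2
+\frac{\lambda_2}{2}\|\eta\|_{\mathcal{M}_{g_2}}^2 .
\]
This is the key fact: every component except the displacements \(u,v\) is directly damped. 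Since \(0\in\rho(\mathcal{A})\) (solvability of the stationary problem by Lax--Milgram, as in \cite{Mukiawa2025}), both Gearhart--Pr\"uss conditions reduce to a single uniform a priori estimate
\[
\|U\|_{\mathcal{H}}\le C\|F\|_{\mathcal{H}}\qquad\text{whenever } i\lambda U+\mathcal{A}U=F,\ \lambda\in\mathbb{R},
\]
with \(C\) independent of \(\lambda\).

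\textbf{Contradiction argument.} We argue by contradiction. Suppose there are \(\lambda_n\in\mathbb{R}\) and \(U_n\in\mathcal{D}(\mathcal{A})\) with \(\|U_n\|_{\mathcal{H}}=1\) and \(F_n:=i\lambda_nU_n+\mathcal{A}U_n\to0\). Because \(\mathrm{Re}(\mathcal{A}U_n,U_n)=\mathrm{Re}(F_n,U_n)\to0\), the dissipation inequality gives \(\psi_n,\phi_n\to0\) in \(L^2\) and \(\sigma_n,\eta_n\to0\) in the memory spaces. It remains to show that \(u_{n,xx}\to0\) and \(v_{n,x}\to0\); this contradicts \(\|U_n\|=1\), and it covers both the case \(\lambda_n\) bounded (where it shows \(i\lambda\) cannot be in the approximate point spectrum, hence \(i\mathbb{R}\subset\rho(\mathcal{A})\) since \(\mathcal{A}\) is m-accretive) and the case \(|\lambda_n|\to\infty\).

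\textbf{Recovering the potential energy.} To get \(u_{xx}\to0\), take the \(L^2\) inner product of the second resolvent equation,
\[
i\lambda\psi+\gamma_1\psi+\bar\mu_1u_{xxxx}+\int_0^{+\infty} g_1\sigma_{xxxx}\,ds=F_2,
\]
with \(u\), integrating by parts twice. This yields
\[
\bar\mu_1\|u_{xx}\|^2=-\int_0^{+\infty} g_1(s)(\sigma_{xx},u_{xx})\,ds-(\gamma_1\psi,u)-(i\lambda\psi,u)+(F_2,u).
\]
The memory term is bounded by \(\sqrt{\beta_1}\|\sigma\|_{\mathcal{M}_{g_1}}\|u_{xx}\|\to0\). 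The term \(i\lambda\psi\) requires care, since \(\lambda\) may be large. Using the first equation, \(i\lambda u-\psi=F_1\), we have
\[
(i\lambda\psi,u)=-(\psi,i\lambda u)=-\|\psi\|^2-(\psi,F_1)\to0,
\]
because \(\|F_1\|_{L^2}\le\sqrt{c_e}\|F_{1,xx}\|\).

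The corresponding identity for \(v\) is treated the same way with \(H_0^1\) and Poincar\'e. This multiplier is the natural one because the damping is viscous; a more delicate choice such as \(\int g_1\sigma\,ds\) is not needed here.

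\textbf{Main obstacle.} The main difficulty is keeping every bound uniform in \(\lambda\) while justifying the integrations by parts under the nonstandard domain boundary conditions, which involve \(\bar\mu_1u+\int g_1\sigma\). For this we will use the combined quantity \(\bar\mu_1u+\int g_1\sigma\,ds\in H^4\cap H_a^2\), whose second derivative vanishes at the endpoints, so that the boundary terms drop. The third equation, \(i\lambda\sigma+\sigma_s-\psi=F_3\), is only needed to ensure that \(\sigma\) is controlled, and this is already given by the dissipation inequality.

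Finally, the uniform resolvent bound follows, and Gearhart--Pr\"uss yields constants \(\alpha\ge1\) and \(\beta>0\) such that \(\|e^{-\mathcal{A}t}\|\le\alpha e^{-\beta t}\).
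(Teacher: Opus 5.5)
Your proposal is correct and follows essentially the same route as the paper. Both arguments use Gearhart--Pr\"uss, the dissipation inequality coming from the monotonicity of $\mathcal{A}$, and the multiplier $u$ (resp.\ $v$) in the second (resp.\ fifth) equation, with the $i\lambda$-term removed through the first (resp.\ fourth) equation. The only difference is organizational: you fold conditions (i) and (ii) into a single contradiction argument over sequences $\lambda_n$, whereas the paper proves (i) by a Weyl-sequence contradiction (split into $\omega=0$ and $\omega\neq0$) and (ii) by direct estimates of the form $\|U\|_{\mathcal{H}}^2\le C\|U\|_{\mathcal{H}}\|\Upsilon\|_{\mathcal{H}}$.
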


We then recall the Gearhart--Pr\"uss theorem in the form that will be used
below.

\begin{theorem}[Gearhart--Pr\"uss {\cite{Gearhart1978,Pruss1984}}]\label{GP}
Let $T(t)$ be a bounded $C_0$-semigroup on a Hilbert space $H$ with generator
$-\mathcal{A}$. Then $T(t)$ is exponentially stable if and only if the
following conditions hold:
\begin{enumerate}[(i)]
    \item $i\mathbb{R}\subset\varrho(\mathcal{A})$, where $\varrho(\mathcal{A})$
    is the resolvent set of $\mathcal{A}$;
    \item $\displaystyle\sup_{\lambda\in\mathbb{R}}\|(i\lambda I-\mathcal{A})^{-1}\|_{\mathcal{L}(H)}<{+\infty}$.
\end{enumerate}
\end{theorem}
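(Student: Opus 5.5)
We prove the equivalence directly, writing $B:=-\mathcal{A}$ for the generator and $R(z):=(z-B)^{-1}=(z+\mathcal{A})^{-1}$. Since $i\lambda I-\mathcal{A}=-\bigl((-i\lambda)I-B\bigr)$, conditions (i)--(ii) are equivalent to $i\mathbb{R}\subset\varrho(B)$ and $K_0:=\sup_{\lambda\in\mathbb{R}}\|R(i\lambda)\|<+\infty$. We fix $M\ge1$ with $\|T(t)\|\le M$ for all $t\ge0$.

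\textbf{Necessity.} Suppose $\|T(t)\|\le \alpha e^{-\beta t}$. The Laplace transform $\int_0^{+\infty}e^{-zt}T(t)x\,dt$ converges absolutely for $\operatorname{Re}z>-\beta$ and equals $R(z)x$, by the standard Hille--Yosida argument. Hence $i\mathbb{R}\subset\varrho(B)$ and $\|R(i\lambda)\|\le \alpha/\beta$ for all $\lambda\in\mathbb{R}$.

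\textbf{Sufficiency, step 1: a uniform bound on a half-strip.} For $0<\varepsilon\le 1/(2K_0)$ we write
\[
R(\varepsilon+i\lambda)=R(i\lambda)\bigl(I+\varepsilon R(i\lambda)\bigr)^{-1},
\]
and a Neumann series gives $\|R(\varepsilon+i\lambda)\|\le 2K_0$. For $\varepsilon\ge 1/(2K_0)$ the Hille--Yosida bound gives $\|R(\varepsilon+i\lambda)\|\le M/\varepsilon\le 2MK_0$. Thus
\[
K:=\sup\{\|R(z)\|:\operatorname{Re}z>0\}<+\infty .
\]

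\textbf{Sufficiency, step 2: Plancherel.} This is where the Hilbert structure is essential. Fix $\omega>0$. For $x\in H$, the function $t\mapsto e^{-\omega t}T(t)x\,\mathbbm{1}_{t\ge0}$ lies in $L^1\cap L^2(\mathbb{R};H)$, and its Fourier transform is $\lambda\mapsto R(\omega+i\lambda)x$. Plancherel's theorem for $H$-valued functions gives
\[
\int_{\mathbb{R}}\|R(\omega+i\lambda)x\|^2\,d\lambda
=2\pi\int_0^{+\infty}e^{-2\omega t}\|T(t)x\|^2\,dt
\le \frac{\pi M^2}{\omega}\|x\|^2 .
\]
For $0<a\le\omega$, the resolvent identity yields
\[
R(a+i\lambda)=\bigl[I+(\omega-a)R(a+i\lambda)\bigr]R(\omega+i\lambda),
\]
so $\|R(a+i\lambda)x\|\le(1+\omega K)\|R(\omega+i\lambda)x\|$. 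Applying Plancherel again at abscissa $a$, we obtain
\[
\int_0^{+\infty}e^{-2at}\|T(t)x\|^2\,dt\le C\|x\|^2,
\]
with $C$ independent of $a$. Letting $a\downarrow0$ and using monotone convergence, we conclude
\[
\int_0^{+\infty}\|T(t)x\|^2\,dt\le C\|x\|^2 \qquad \text{for all } x\in H .
\]

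\textbf{Sufficiency, step 3: Datko's argument.} For each $t>0$,
\[
t\|T(t)x\|^2=\int_0^t\|T(t-s)T(s)x\|^2\,ds\le M^2C\|x\|^2 .
\]
Hence $\|T(t_0)\|\le\tfrac12$ for $t_0$ large. Writing $t=nt_0+r$ with $0\le r<t_0$ and using the semigroup property gives $\|T(t)\|\le 2M e^{-(\ln 2/t_0)t}$, which is exponential stability.

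The main obstacle is step 2. The uniform bound of step 1 on the whole half-plane is precisely what makes the constant independent of $a$, and this independence is what allows the limit $a\to0$. The vector-valued Plancherel theorem used there is valid in Hilbert spaces; this is where the Hilbert-space assumption cannot be dropped.
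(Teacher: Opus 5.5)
Your proof is correct, and it necessarily differs from the paper, because the paper gives no proof of Theorem \ref{GP}. It simply quotes the result from Gearhart and Pr\"uss and uses only the sufficiency direction, for the contraction semigroup of Lemma \ref{lem:monotone}. You give the standard Plancherel-plus-Datko argument. The cited originals instead characterize when $e^{zt_0}\in\varrho(T(t_0))$ through the resolvents at $z+2\pi ik/t_0$, $k\in\mathbb{Z}$, and conclude via the spectral radius of $T(t_0)$.

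Your route makes explicit three things the citation hides:
\begin{enumerate}[(a)]
\item Step 1 shows exactly why control on the imaginary axis suffices in the stated form. Boundedness of the semigroup, through the Neumann series and the Hille--Yosida bound, propagates $K_0$ to the whole open right half-plane, which is the condition a general $C_0$-semigroup would need. Without boundedness the axis condition alone fails, e.g.\ $T(t)=e^{t}$ on $\mathbb{C}$.
\item Step 2 isolates the single use of the Hilbert structure.
\item Step 3 gives explicit constants: $\alpha=2M$ and $\beta=\ln 2/t_0$, with $t_0\ge 4M^2C$, $C=(1+\omega K)^2M^2/(2\omega)$ and $K\le 2MK_0$. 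In the application $M=1$ and $K_0$ is bounded by the constant in \eqref{bound}. So your argument makes the decay rate in Theorem \ref{Exponential stability-precise} computable, which the bare citation does not.
\end{enumerate}
The cost is only length.
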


\begin{remark}\label{rem:GP-equivalent}
In the statement of Theorem \ref{GP}, condition (ii) is the standard uniform
boundedness of the resolvent on the imaginary axis. In the applications below
we shall verify the apparently weaker condition
\[
\limsup_{|\lambda|\to{+\infty}}\|(i\lambda I-\mathcal{A})^{-1}\|_{\mathcal{L}(H)}<{+\infty}.
\]
Once condition (i) holds, the resolvent map
$\lambda\mapsto(i\lambda I-\mathcal{A})^{-1}$ is continuous on
$\mathbb{R}$; hence its boundedness for $|\lambda|$ sufficiently large,
together with its boundedness on every compact interval, implies the uniform
boundedness on the whole real line. Thus the two formulations are equivalent
under condition (i), and we may verify the limit superior version in practice.
In fact, the estimates obtained below yield the stronger uniform bound for all
$\lambda\in\mathbb{R}$, so the limit superior formulation is not needed.
\end{remark}

We first record the monotonicity of $\mathcal{A}$, which was used implicitly in
\cite{Mukiawa2025} and which we shall need below.

\begin{lemma}[Monotonicity of $\mathcal{A}$]\label{lem:monotone}
Under Assumption \ref{assumg}, the operator $\mathcal{A}$ is monotone on
$\mathcal{D}(\mathcal{A})$; that is,
\begin{equation}\label{eq:monotone}
\operatorname{Re}\langle \mathcal{A} U,U\rangle_{\mathcal{H}}\ge 0,
\qquad \forall\,U\in\mathcal{D}(\mathcal{A}).
\end{equation}
More precisely, for every $U=(u,\psi,\sigma,v,\phi,\eta)\in\mathcal{D}(\mathcal{A})$,
\begin{equation}\label{eq:monotone-explicit}
\operatorname{Re}\langle \mathcal{A} U,U\rangle_{\mathcal{H}}
\ge
\gamma_1\|\psi\|^2+\frac{\lambda_1}{2}\|\sigma\|_{\mathcal{M}_{g_1}}^2
+\gamma_2\|\phi\|^2+\frac{\lambda_2}{2}\|\eta\|_{\mathcal{M}_{g_2}}^2.
\end{equation}
Consequently, $-\mathcal{A}$ generates a $C_0$-semigroup of contractions
$\{e^{-\mathcal{A} t}\}_{t\ge0}$ on $\mathcal{H}$.
\end{lemma}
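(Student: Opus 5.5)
We compute $\operatorname{Re}\langle \mathcal{A}U,U\rangle_{\mathcal{H}}$ directly for $U=(u,\psi,\sigma,v,\phi,\eta)\in\mathcal{D}(\mathcal{A})$, component by component with the inner product of $\mathcal{H}$, and identify the cancellations.

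\textbf{Deck component.} The first component gives $-\bar\mu_1(\psi_{xx},u_{xx})_\Omega$. For the second, we integrate $\bar\mu_1 u_{xxxx}+\int_0^{+\infty}g_1\sigma_{xxxx}\,ds$ against $\psi$ twice by parts. The boundary terms vanish because $\psi(0)=\psi(\ell)=0$ and the combined condition $\bar\mu_1u_{xx}+\int g_1\sigma_{xx}\,ds=0$ at $x=0,\ell$ holds in $\mathcal{D}(\mathcal{A})$. This yields
\[
\gamma_1\|\psi\|^2+\bar\mu_1(u_{xx},\psi_{xx})_\Omega+\int_0^{+\infty}g_1(s)(\sigma_{xx}(s),\psi_{xx})_\Omega\,ds .
\]
The third component gives $(\sigma_s,\sigma)_{\mathcal{M}_{g_1}}-\int_0^{+\infty}g_1(s)(\psi_{xx},\sigma_{xx}(s))_\Omega\,ds$. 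Taking real parts, the $\bar\mu_1$ terms cancel, since $\operatorname{Re}(a,b)=\operatorname{Re}(b,a)$. The two memory cross terms also cancel in real part.

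\textbf{Cable component.} The same computation applies, using $\phi\in H_0^1$ and a single integration by parts. The only boundary terms involve $\phi(0),\phi(\ell)=0$. The surviving terms are $\gamma_2\|\phi\|^2$ and $\operatorname{Re}(\eta_s,\eta)_{\mathcal{M}_{g_2}}$.

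\textbf{Memory terms (main obstacle).} The delicate point is to show
\[
\operatorname{Re}(\sigma_s,\sigma)_{\mathcal{M}_{g_1}}=\frac12\int_0^{+\infty}g_1(s)\frac{d}{ds}\|\sigma_{xx}(s)\|^2\,ds\ge-\frac12\int_0^{+\infty}g_1'(s)\|\sigma_{xx}(s)\|^2\,ds\ge\frac{\lambda_1}{2}\|\sigma\|^2_{\mathcal{M}_{g_1}} .
\]
We will use the convention, implicit in $\mathcal{D}(\mathcal{M}_{g_1})$ and in the definition $\sigma(x,t,0)=0$, that $\sigma(0)=0$. We integrate by parts on $[\epsilon,R]$ and let $\epsilon\to0$, $R\to\infty$. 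The boundary term at $0$ vanishes because $\sigma(0)=0$. At infinity, $g_1(s)\|\sigma_{xx}(s)\|^2\ge0$ contributes with a favorable sign: the term $\frac12 g_1(R)\|\sigma_{xx}(R)\|^2$ is nonnegative, so it can simply be dropped. To justify the limit, note that $g_1\|\sigma_{xx}\|^2$ and $g_1\,\frac{d}{ds}\|\sigma_{xx}\|^2$ are integrable, since $\sigma,\sigma_s\in\mathcal{M}_{g_1}$. Hence $g_1\|\sigma_{xx}\|^2$ has a limit along a sequence $R_n\to\infty$, which must be $0$, and $-g_1'\|\sigma_{xx}\|^2$ is then handled by monotone convergence. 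Finally, \eqref{ass1.9} gives $-g_1'\ge\lambda_1g_1$. The identical argument applies to $\eta$ with $g_2,\lambda_2$.

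\textbf{Conclusion.} Summing the pieces gives \eqref{eq:monotone-explicit}, and hence \eqref{eq:monotone}. The generation of a contraction semigroup then follows from the Lumer--Phillips theorem. This combines monotonicity with the maximality (range condition $\mathcal{R}(I+\mathcal{A})=\mathcal{H}$) established in \cite[Section~3]{Mukiawa2025}, which we quote; density of $\mathcal{D}(\mathcal{A})$ is automatic for maximal monotone operators on Hilbert spaces.
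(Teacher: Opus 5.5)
Your proposal is correct and follows the same route as the paper. Both compute $\langle\mathcal{A}U,U\rangle_{\mathcal{H}}$ componentwise, cancel the elastic and memory cross terms in the real part, integrate by parts in $s$ using $\sigma(0)=0$ and $g_i'+\lambda_i g_i\le0$, and conclude with Lumer--Phillips and the maximality quoted from \cite{Mukiawa2025}. You are in fact slightly more careful than the paper in two places: you handle the spatial boundary terms through the combined condition $\bar{\mu}_1u_{xx}+\int_0^{+\infty}g_1(s)\sigma_{xx}(s)\,ds=0$ from $\mathcal{D}(\mathcal{A})$, and you control the term as $s\to+\infty$ by its sign and a $\liminf$ argument rather than relying only on $g_i(+\infty)=0$.
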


\begin{proof}
Let $U=(u,\psi,\sigma,v,\phi,\eta)\in\mathcal{D}(\mathcal{A})$. By the definition
of $\mathcal{A}$ in \eqref{A} and of the inner product in $\mathcal{H}$, we have
\begin{align*}
\langle \mathcal{A} U,U\rangle_{\mathcal{H}}
&=
\bar{\mu}_1\bigl(-\psi_{xx},u_{xx}\bigr)_\Omega
+\Bigl(\gamma_1\psi+\bar{\mu}_1u_{xxxx}
+\int_0^{+\infty} g_1(s)\sigma_{xxxx}(s)\,ds,\psi\Bigr)_\Omega\\
&\quad
+\bigl(\sigma_s-\psi,\sigma\bigr)_{\mathcal{M}_{g_1}}
+\bar{\mu}_2\bigl(-\phi_x,v_x\bigr)_\Omega
\\
&\quad+\Bigl(\gamma_2\phi-\bar{\mu}_2v_{xx}
-\int_0^{+\infty} g_2(s)\eta_{xx}(s)\,ds,\phi\Bigr)_\Omega
+\bigl(\eta_s-\phi,\eta\bigr)_{\mathcal{M}_{g_2}}.
\end{align*}
Here we used that the first component of $\mathcal{A} U$ is $-\psi$ and the
fourth is $-\phi$, so that the corresponding contributions to the inner product
are $-\bar{\mu}_1(\psi_{xx},u_{xx})_\Omega$ and
$-\bar{\mu}_2(\phi_x,v_x)_\Omega$, respectively.

We now treat the elastic and memory terms separately.

\smallskip
\noindent\textbf{Elastic terms.}
Using the boundary conditions $u=\psi=u_{xx}=\psi_{xx}=0$ at $x=0,\ell$, two
integrations by parts give
\[
\bar{\mu}_1\bigl(-\psi_{xx},u_{xx}\bigr)_\Omega
+\bar{\mu}_1\bigl(u_{xxxx},\psi\bigr)_\Omega
=
-\bar{\mu}_1(u_{xx},\psi_{xx})_\Omega
+\bar{\mu}_1(u_{xx},\psi_{xx})_\Omega.
\]
In the complexified setting the two terms are conjugate to each other, so their
sum is purely imaginary and therefore has zero real part. In particular, the
elastic terms do not contribute to
$\operatorname{Re}\langle\mathcal{A} U,U\rangle_{\mathcal{H}}$. Similarly,
\[
\bar{\mu}_2\bigl(-\phi_x,v_x\bigr)_\Omega
+\bar{\mu}_2\bigl(-v_{xx},\phi\bigr)_\Omega
=
-\bar{\mu}_2(v_x,\phi_x)_\Omega
+\bar{\mu}_2(v_x,\phi_x)_\Omega,
\]
whose sum is again purely imaginary and hence has zero real part.

\smallskip
\noindent\textbf{Memory terms.}
For the first memory term, two integrations by parts yield, using the boundary
conditions $\sigma(0)=\sigma(\ell)=\sigma_{xx}(0)=\sigma_{xx}(\ell)=0$ and
$\psi(0)=\psi(\ell)=\psi_{xx}(0)=\psi_{xx}(\ell)=0$,
\[
\Bigl(\int_0^{+\infty} g_1(s)\sigma_{xxxx}(s)\,ds,\psi\Bigr)_\Omega
=
\Bigl(\int_0^{+\infty} g_1(s)\sigma_{xx}(s)\,ds,\psi_{xx}\Bigr)_\Omega,
\]
with no negative sign. On the other hand, the contribution of the third
component $\sigma_s-\psi$ to the inner product contains the term
\[
\bigl(-\psi,\sigma\bigr)_{\mathcal{M}_{g_1}}
=
-\int_0^{+\infty} g_1(s)\bigl(\psi_{xx},\sigma_{xx}(s)\bigr)_\Omega\,ds.
\]
Therefore, the sum of the two off-diagonal memory terms is
\[
Z_1:=\int_0^{+\infty} g_1(s)\bigl(\sigma_{xx}(s),\psi_{xx}\bigr)_\Omega\,ds
-\int_0^{+\infty} g_1(s)\bigl(\psi_{xx},\sigma_{xx}(s)\bigr)_\Omega\,ds.
\]
Since the inner product is conjugate-linear in the second variable, we have
\[
(\psi_{xx},\sigma_{xx}(s))_\Omega
=
\overline{(\sigma_{xx}(s),\psi_{xx})_\Omega},
\]
and hence
\begin{align*}
Z_1
&=
\int_0^{+\infty} g_1(s)\bigl(\sigma_{xx}(s),\psi_{xx}\bigr)_\Omega\,ds
-\overline{\int_0^{+\infty} g_1(s)\bigl(\sigma_{xx}(s),\psi_{xx}\bigr)_\Omega\,ds}\\
&=
2i\operatorname{Im}\int_0^{+\infty} g_1(s)\bigl(\sigma_{xx}(s),\psi_{xx}\bigr)_\Omega\,ds,
\end{align*}
which is purely imaginary and therefore does not contribute to the real part.
The same argument applies to the $\eta$-terms. Consequently, taking the real
part of $\langle \mathcal{A} U,U\rangle_{\mathcal{H}}$ gives
\begin{align*}
\operatorname{Re}\langle \mathcal{A} U,U\rangle_{\mathcal{H}}
&=
\gamma_1\|\psi\|^2+\gamma_2\|\phi\|^2
+\operatorname{Re}\bigl(\sigma_s,\sigma\bigr)_{\mathcal{M}_{g_1}}
+\operatorname{Re}\bigl(\eta_s,\eta\bigr)_{\mathcal{M}_{g_2}}.
\end{align*}
Finally, we treat the memory terms. By Assumption \ref{assumg}, we have
$g_i(s)\le g_i(0)e^{-\lambda_i s}$ for almost every $s\ge0$, so that
$g_i({+\infty})=0$. Moreover, $\sigma(x,t,0)=u(x,t)-u(x,t)=0$ and
$\eta(x,t,0)=v(x,t)-v(x,t)=0$. Integrating by parts in $s$ and using these
facts, we obtain
\[
\bigl(\sigma_s,\sigma\bigr)_{\mathcal{M}_{g_1}}
=
\frac12\int_0^{+\infty} g_1(s)\frac{d}{ds}\|\sigma_{xx}(s)\|^2\,ds
=
-\frac12\int_0^{+\infty} g_1'(s)\|\sigma_{xx}(s)\|^2\,ds,
\]
and the conclusion follows from $g_1'+\lambda_1g_1\le0$. The same argument
applies to $\eta$. Therefore \eqref{eq:monotone-explicit} holds, and since
$\gamma_i>0$ and $\lambda_i>0$, this implies \eqref{eq:monotone}. Finally,
since $\mathcal{A}$ is maximal monotone by \cite[Section~3]{Mukiawa2025}, the
L\"umer--Phillips theorem shows that $-\mathcal{A}$ generates a $C_0$-semigroup of
contractions.
\end{proof}

\begin{proof}[Proof of Theorem \ref{Exponential stability-precise}]
By Lemma \ref{lem:monotone}, $-\mathcal{A}$ generates a $C_0$-semigroup of
contractions, so the Gearhart--Pr\"uss theorem (Theorem \ref{GP}) applies.

We first prove condition (i): $i\mathbb{R}\subset\varrho(\mathcal{A})$. Assume
for contradiction that there exists $\omega\in\mathbb{R}$ such that
$i\omega\notin\varrho(\mathcal{A})$. Since $\mathcal{A}$ is maximal monotone,
it is a closed operator, and for every $\lambda>0$ the operator
$\lambda I+\mathcal{A}$ is bijective. Since $-\mathcal{A}$ generates a
contraction semigroup, every point of $i\mathbb{R}\cap\sigma(\mathcal{A})$ is an
approximate eigenvalue; see, e.g., \cite[Chapter~2]{Pazy1983}. Hence
$i\omega\notin\varrho(\mathcal{A})$ implies
$i\omega\in\sigma_{\mathrm{ap}}(\mathcal{A})$. Therefore there exists a
sequence
$\{U_n\}=(u_n,\psi_n,\sigma_n,v_n,\phi_n,\eta_n)\in\mathcal{D}(\mathcal{A})$
with $\|U_n\|_{\mathcal{H}}=1$ for all $n\in\mathbb{N}$, satisfying
\begin{equation}\label{eq:resolvent_approx}
F_n:=i\omega U_n-\mathcal{A}U_n\to0\quad\text{in }\mathcal{H}.
\end{equation}
Writing $F_n=(f_n^1,f_n^2,f_n^3,f_n^4,f_n^5,f_n^6)$, the componentwise
convergence in \eqref{eq:resolvent_approx} reads
\begin{equation}\label{eq:component_convergence}
\begin{cases}
f_n^1=i\omega u_n+\psi_n\to0 & \text{in }H_a^2(\Omega),\\
f_n^2=i\omega\psi_n-\gamma_1\psi_n-\bar{\mu}_1 u_{n,xxxx}
-\displaystyle\int_0^{+\infty}g_1(s)\sigma_{n,xxxx}(s)\,ds\to0
& \text{in }L^2(\Omega),\\
f_n^3=i\omega\sigma_n-\sigma_{n,s}+\psi_n\to0 & \text{in }\mathcal{M}_{g_1},\\
f_n^4=i\omega v_n+\phi_n\to0 & \text{in }H_0^1(\Omega),\\
f_n^5=i\omega\phi_n-\gamma_2\phi_n+\bar{\mu}_2 v_{n,xx}
+\displaystyle\int_0^{+\infty}g_2(s)\eta_{n,xx}(s)\,ds\to0
& \text{in }L^2(\Omega),\\
f_n^6=i\omega\eta_n-\eta_{n,s}+\phi_n\to0 & \text{in }\mathcal{M}_{g_2}.
\end{cases}
\end{equation}
From \eqref{eq:resolvent_approx}, Lemma \ref{lem:monotone}, and the
boundedness of $U_n$ in $\mathcal{H}$, we deduce
\begin{align}\label{eq:pq}
\gamma_1\|\psi_n\|^2+\frac{\lambda_1}{2}\|\sigma_n\|_{\mathcal{M}_{g_1}}^2
+\gamma_2\|\phi_n\|^2+\frac{\lambda_2}{2}\|\eta_n\|_{\mathcal{M}_{g_2}}^2
&\le \operatorname{Re}\langle \mathcal{A} U_n,U_n\rangle_{\mathcal{H}}\notag\\
&= -\operatorname{Re}\langle i\omega U_n-\mathcal{A}U_n,U_n\rangle_{\mathcal{H}}\notag\\
&\le \|F_n\|_{\mathcal{H}}\|U_n\|_{\mathcal{H}}\to0.
\end{align}
Here we used that $\operatorname{Re}\langle i\omega U_n,U_n\rangle_{\mathcal{H}}=0$.
Consequently,
\begin{equation}\label{gj:pq}
\|\psi_n\|^2,\ \|\sigma_n\|_{\mathcal{M}_{g_1}}^2,\ \|\phi_n\|^2,\ \|\eta_n\|_{\mathcal{M}_{g_2}}^2\to0.
\end{equation}
Since $\psi_n\to0$ and $\phi_n\to0$ in $L^2(\Omega)$ by \eqref{gj:pq}, the
damping terms $\gamma_1\psi_n$ and $\gamma_2\phi_n$ can be dropped in the
limit, and equation \eqref{eq:resolvent_approx} simplifies to
\begin{equation}\label{eq:convergence_1}
\begin{cases}
f_n^1=i\omega u_n+\psi_n\to0 & \text{in }H_a^2(\Omega),\\
f_n^2=i\omega\psi_n-\bar{\mu}_1 u_{n,xxxx}
-\displaystyle\int_0^{+\infty}g_1(s)\sigma_{n,xxxx}(s)\,ds\to0
& \text{in }L^2(\Omega),\\
f_n^3=i\omega\sigma_n-\sigma_{n,s}+\psi_n\to0 & \text{in }\mathcal{M}_{g_1},\\
f_n^4=i\omega v_n+\phi_n\to0 & \text{in }H_0^1(\Omega),\\
f_n^5=i\omega\phi_n+\bar{\mu}_2 v_{n,xx}
+\displaystyle\int_0^{+\infty}g_2(s)\eta_{n,xx}(s)\,ds\to0
& \text{in }L^2(\Omega),\\
f_n^6=i\omega\eta_n-\eta_{n,s}+\phi_n\to0 & \text{in }\mathcal{M}_{g_2}.
\end{cases}
\end{equation}
We now split the proof into two cases.

\medskip
\noindent\textbf{Case 1: $\omega=0$.}
From \eqref{eq:convergence_1}$_1$ and \eqref{eq:convergence_1}$_4$, we obtain
\begin{align}\label{ssf_1}
\psi_n\to0\quad\text{in }H_a^2(\Omega),\qquad
\phi_n\to0\quad\text{in }H_0^1(\Omega).
\end{align}
By Poincar\'e's inequality,
\begin{align}\label{ssf_2}
\psi_n\to0\quad\text{in }L^2(\Omega),\qquad
\phi_n\to0\quad\text{in }L^2(\Omega).
\end{align}
From \eqref{eq:convergence_1}$_3$ and \eqref{eq:convergence_1}$_6$, together
with \eqref{ssf_2}, we derive
\begin{align}\label{ssf_3}
\sigma_{n,s}\to0\quad\text{in }\mathcal{M}_{g_1},\qquad
\eta_{n,s}\to0\quad\text{in }\mathcal{M}_{g_2}.
\end{align}
Taking the $L^2$-inner products of \eqref{eq:convergence_1}$_2$ with $u_n$ and
\eqref{eq:convergence_1}$_5$ with $v_n$, respectively, and then taking real
parts, yields
\begin{align*}
\bar{\mu}_1\|u_{n,xx}\|^2+\bar{\mu}_2\|v_{n,x}\|^2
&=-\operatorname{Re}(f_n^2,u_n)-\operatorname{Re}(f_n^5,v_n)\\
&\quad-\operatorname{Re}\left(\int_0^{+\infty} g_1(s)\sigma_{n,xx}(s)\,ds,u_{n,xx}\right)\\
&\quad-\operatorname{Re}\left(\int_0^{+\infty} g_2(s)\eta_{n,x}(s)\,ds,v_{n,x}\right).
\end{align*}
Since $\bar{\mu}_1\|u_{n,xx}\|^2+\bar{\mu}_2\|v_{n,x}\|^2
\le\|U_n\|_{\mathcal{H}}=1$, by Poincar\'e's inequality and H\"older's
inequality, there exists a positive constant $c$ such that
\begin{align*}
\bar{\mu}_1\|u_{n,xx}\|^2+\bar{\mu}_2\|v_{n,x}\|^2
&\le c\|f_n^2\|\|u_n\|+c\|f_n^5\|\|v_n\|\\
&\quad+\|u_{n,xx}\|\int_0^{+\infty} g_1(s)\|\sigma_{n,xx}(s)\|\,ds
+\|v_{n,x}\|\int_0^{+\infty} g_2(s)\|\eta_{n,x}(s)\|\,ds\\
&\le c\|f_n^2\|+c\|f_n^5\|\\
&\quad+\left(\int_0^{+\infty} g_1(s)\,ds\right)^{1/2}\|\sigma_n\|_{\mathcal{M}_{g_1}}
+\left(\int_0^{+\infty} g_2(s)\,ds\right)^{1/2}\|\eta_n\|_{\mathcal{M}_{g_2}}.
\end{align*}
From \eqref{eq:convergence_1}$_2$, \eqref{eq:convergence_1}$_5$, and
\eqref{gj:pq}, we conclude
\begin{align}\label{ssf_4}
u_{n,xx}\to0,\quad v_{n,x}\to0\quad\text{in }L^2(\Omega).
\end{align}
Hence, from \eqref{gj:pq} and \eqref{ssf_4}, we obtain $\|U_n\|\to0$ in
$\mathcal{H}$, which contradicts $\|U_n\|_{\mathcal{H}}=1$ for all
$n\in\mathbb{N}$.

\medskip
\noindent\textbf{Case 2: $\omega\ne0$.}
From \eqref{eq:convergence_1}$_1$ and \eqref{gj:pq}, the relation
$i\omega u_n=f_n^1-\psi_n$ gives
\begin{equation}\label{eq:u_n-small-L2}
u_n\to0\quad\text{in }L^2(\Omega).
\end{equation}
Similarly, from \eqref{eq:convergence_1}$_4$ and \eqref{gj:pq}, the relation
$i\omega v_n=f_n^4-\phi_n$ gives
\begin{equation}\label{eq:v_n-small-L2}
v_n\to0\quad\text{in }L^2(\Omega).
\end{equation}
We prove the two remaining convergences in two steps.

\smallskip
\noindent\textbf{Step 1: $\|u_{n,xx}\|\to0$.}
Taking the $L^2$-inner products of \eqref{eq:convergence_1}$_2$ with $u_n$ and
\eqref{eq:convergence_1}$_1$ with $\psi_n$, respectively, and then taking real
parts, we obtain
\begin{align}\label{eq:case2-u-identity}
 \bar{\mu}_1\|u_{n,xx}\|^2
=
&-\operatorname{Re}(f_n^2,u_n)-\operatorname{Re}(f_n^1,\psi_n)
\notag\\
&+\|\psi_n\|^2
-\operatorname{Re}\int_0^{+\infty} g_1(s)\bigl(\sigma_{n,xx}(s),u_{n,xx}\bigr)_\Omega\,ds.
\end{align}
Indeed, from \eqref{eq:convergence_1}$_2$,
\[
(f_n^2,u_n)
=
i\omega(\psi_n,u_n)-\bar{\mu}_1(u_{n,xxxx},u_n)
-\int_0^{+\infty} g_1(s)(\sigma_{n,xxxx}(s),u_n)_\Omega\,ds,
\]
and two integrations by parts give
\[
(u_{n,xxxx},u_n)=\|u_{n,xx}\|^2,\qquad
(\sigma_{n,xxxx}(s),u_n)_\Omega=(\sigma_{n,xx}(s),u_{n,xx})_\Omega.
\]
Hence
\[
\operatorname{Re}(f_n^2,u_n)
=
\operatorname{Re}\bigl[i\omega(\psi_n,u_n)\bigr]
-\bar{\mu}_1\|u_{n,xx}\|^2
-\operatorname{Re}\int_0^{+\infty} g_1(s)(\sigma_{n,xx}(s),u_{n,xx})_\Omega\,ds.
\]
On the other hand, from \eqref{eq:convergence_1}$_1$,
\[
(f_n^1,\psi_n)=i\omega(u_n,\psi_n)+\|\psi_n\|^2,
\]
so that
\[
\operatorname{Re}(f_n^1,\psi_n)
=
\operatorname{Re}\bigl[i\omega(u_n,\psi_n)\bigr]+\|\psi_n\|^2.
\]
Since the inner product is conjugate-linear in the second variable,
\[
\operatorname{Re}\bigl[i\omega(\psi_n,u_n)\bigr]
+
\operatorname{Re}\bigl[i\omega(u_n,\psi_n)\bigr]=0.
\]
Adding the two real parts therefore yields \eqref{eq:case2-u-identity}.
Now, by Poincar\'e's inequality, H\"older's inequality, and
$\|U_n\|_{\mathcal{H}}=1$, there exists a positive constant $c$ such that
\begin{align*}
\bar{\mu}_1\|u_{n,xx}\|^2
&\le c\|f_n^2\|\|u_n\|+c\|f_n^1\|\|\psi_n\|+\|\psi_n\|^2\\
&\quad+\|u_{n,xx}\|\int_0^{+\infty} g_1(s)\|\sigma_{n,xx}(s)\|\,ds\\
&\le c\|f_n^2\|+c\|f_n^1\|+\|\psi_n\|^2
+\left(\int_0^{+\infty} g_1(s)\,ds\right)^{1/2}\|\sigma_n\|_{\mathcal{M}_{g_1}},
\end{align*}
where we used $\|u_n\|\le\|U_n\|_{\mathcal{H}}=1$. From
\eqref{eq:convergence_1}$_1$, \eqref{eq:convergence_1}$_2$,
\eqref{gj:pq}, and \eqref{eq:u_n-small-L2}, we conclude
\begin{align}\label{ssf_5}
u_{n,xx}\to0\quad\text{in }L^2(\Omega).
\end{align}

\smallskip
\noindent\textbf{Step 2: $\|v_{n,x}\|\to0$.}
Taking the $L^2$-inner products of \eqref{eq:convergence_1}$_5$ with $v_n$ and
\eqref{eq:convergence_1}$_4$ with $\phi_n$, respectively, and then taking real
parts, we obtain
\begin{equation}\label{eq:case2-v-identity}
\bar{\mu}_2\|v_{n,x}\|^2
=
-\operatorname{Re}(f_n^5,v_n)-\operatorname{Re}(f_n^4,\phi_n)
+\|\phi_n\|^2
-\operatorname{Re}\int_0^{+\infty} g_2(s)\bigl(\eta_{n,x}(s),v_{n,x}\bigr)_\Omega\,ds.
\end{equation}
Indeed, from \eqref{eq:convergence_1}$_5$,
\[
(f_n^5,v_n)
=
i\omega(\phi_n,v_n)+\bar{\mu}_2(v_{n,xx},v_n)
+\int_0^{+\infty} g_2(s)(\eta_{n,xx}(s),v_n)_\Omega\,ds,
\]
and two integrations by parts give
\[
(v_{n,xx},v_n)=-\|v_{n,x}\|^2,\qquad
(\eta_{n,xx}(s),v_n)_\Omega=-(\eta_{n,x}(s),v_{n,x})_\Omega.
\]
Hence
\[
\operatorname{Re}(f_n^5,v_n)
=
\operatorname{Re}\bigl[i\omega(\phi_n,v_n)\bigr]
-\bar{\mu}_2\|v_{n,x}\|^2
-\operatorname{Re}\int_0^{+\infty} g_2(s)(\eta_{n,x}(s),v_{n,x})_\Omega\,ds.
\]
On the other hand, from \eqref{eq:convergence_1}$_4$,
\[
(f_n^4,\phi_n)=i\omega(v_n,\phi_n)+\|\phi_n\|^2,
\]
so that
\[
\operatorname{Re}(f_n^4,\phi_n)
=
\operatorname{Re}\bigl[i\omega(v_n,\phi_n)\bigr]+\|\phi_n\|^2.
\]
Since the inner product is conjugate-linear in the second variable,
\[
\operatorname{Re}\bigl[i\omega(\phi_n,v_n)\bigr]
+
\operatorname{Re}\bigl[i\omega(v_n,\phi_n)\bigr]=0.
\]
Adding the two real parts therefore yields \eqref{eq:case2-v-identity}.
By Poincar\'e's inequality, H\"older's inequality, and
$\|U_n\|_{\mathcal{H}}=1$, there exists a positive constant $c$ such that
\begin{align*}
\bar{\mu}_2\|v_{n,x}\|^2
&\le c\|f_n^5\|\|v_n\|+c\|f_n^4\|\|\phi_n\|+\|\phi_n\|^2+\|v_{n,x}\|\int_0^{+\infty} g_2(s)\|\eta_{n,x}(s)\|\,ds\\
&\le c\|f_n^5\|+c\|f_n^4\|+\|\phi_n\|^2
+\left(\int_0^{+\infty} g_2(s)\,ds\right)^{1/2}\|\eta_n\|_{\mathcal{M}_{g_2}},
\end{align*}
where we used $\|v_n\|\le\|U_n\|_{\mathcal{H}}=1$. From
\eqref{eq:convergence_1}$_4$, \eqref{eq:convergence_1}$_5$,
\eqref{gj:pq}, and \eqref{eq:v_n-small-L2}, we conclude
\begin{align}\label{ssf_6}
v_{n,x}\to0\quad\text{in }L^2(\Omega).
\end{align}
Hence, from \eqref{gj:pq}, \eqref{ssf_5}, and \eqref{ssf_6}, we obtain
$\|U_n\|\to0$ in $\mathcal{H}$, which contradicts $\|U_n\|_{\mathcal{H}}=1$
for all $n\in\mathbb{N}$.

Next, we prove condition (ii), namely the uniform boundedness of the resolvent
operator $(i\lambda I-\mathcal{A})^{-1}$ for $|\lambda|$ sufficiently large.
Let $\Upsilon=(h^1,\ldots,h^6)\in\mathcal{H}$ be given, and let
$U=(u,\psi,\sigma,v,\phi,\eta)\in\mathcal{D}(\mathcal{A})$ satisfy the
resolvent equation
\begin{align}\label{eq:rq}
i\lambda U-\mathcal{A}U=\Upsilon,
\end{align}
which in components reads
\begin{equation}\label{eq:boundedness}
\begin{cases}
i\lambda u+\psi=h^1,\\
i\lambda\psi-\gamma_1\psi-\bar{\mu}_1 u_{xxxx}
-\displaystyle\int_0^{+\infty}g_1(s)\sigma_{xxxx}(s)\,ds=h^2,\\
i\lambda\sigma-\sigma_s+\psi=h^3,\\
i\lambda v+\phi=h^4,\\
i\lambda\phi-\gamma_2\phi+\bar{\mu}_2 v_{xx}
+\displaystyle\int_0^{+\infty}g_2(s)\eta_{xx}(s)\,ds=h^5,\\
i\lambda\eta-\eta_s+\phi=h^6.
\end{cases}
\end{equation}
Our objective is to establish the existence of a constant $C>0$, independent
of $\lambda$, such that for every $\lambda\in\mathbb{R}$ and every
$\Upsilon\in\mathcal{H}$, the solution $U$ of \eqref{eq:rq} satisfies
\begin{align}\label{bound}
\|U\|_{\mathcal{H}}\le C\|\Upsilon\|_{\mathcal{H}}.
\end{align}
Once \eqref{bound} is proved for all $\lambda\in\mathbb{R}$, condition (ii) of
Theorem \ref{GP} follows immediately. In fact, the estimates below do not
require $|\lambda|$ to be large; they hold uniformly in $\lambda$.

\begin{lemma}\label{eq:hsx}
Under Assumption \ref{assumg}, there exists a constant $C>0$, independent of
$\lambda$, such that
\begin{align}\label{eq:hsx_estimate}
\|\psi\|^2,\ \|\sigma\|_{\mathcal{M}_{g_1}}^2,\ \|\phi\|^2,\ \|\eta\|_{\mathcal{M}_{g_2}}^2
\le C\|U\|_{\mathcal{H}}\|\Upsilon\|_{\mathcal{H}}.
\end{align}
\end{lemma}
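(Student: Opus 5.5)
The plan is to take the $\mathcal{H}$-inner product of the resolvent equation \eqref{eq:rq} with $U$ and pass to real parts, exactly as in the derivation of \eqref{eq:pq}. Since $\operatorname{Re}\langle i\lambda U,U\rangle_{\mathcal{H}}=i\lambda\|U\|_{\mathcal{H}}^2$ has zero real part, we obtain
\[
\operatorname{Re}\langle \mathcal{A}U,U\rangle_{\mathcal{H}}
=-\operatorname{Re}\langle i\lambda U-\mathcal{A}U,U\rangle_{\mathcal{H}}
=-\operatorname{Re}\langle \Upsilon,U\rangle_{\mathcal{H}}
\le \|\Upsilon\|_{\mathcal{H}}\|U\|_{\mathcal{H}},
\]
where the last step is the Cauchy--Schwarz inequality in $\mathcal{H}$.

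For the lower bound on the left-hand side I will invoke the explicit dissipation identity \eqref{eq:monotone-explicit} of Lemma \ref{lem:monotone}, valid because $U\in\mathcal{D}(\mathcal{A})$. This gives
\[
\gamma_1\|\psi\|^2+\frac{\lambda_1}{2}\|\sigma\|_{\mathcal{M}_{g_1}}^2
+\gamma_2\|\phi\|^2+\frac{\lambda_2}{2}\|\eta\|_{\mathcal{M}_{g_2}}^2
\le \|U\|_{\mathcal{H}}\|\Upsilon\|_{\mathcal{H}}.
\]
Each of the four nonnegative terms is bounded by the sum, so \eqref{eq:hsx_estimate} follows with
\[
C=\max\{1/\gamma_1,\,2/\lambda_1,\,1/\gamma_2,\,2/\lambda_2\}.
\]
This constant depends only on the damping coefficients and the kernel decay rates, and in particular not on $\lambda$.

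There is no genuine obstacle in this argument. The only point needing care is that the memory dissipation in Lemma \ref{lem:monotone} relies on the integration by parts in $s$. That step uses $\sigma(0)=\eta(0)=0$ and the vanishing of the boundary term $g_i(s)\|\sigma_{xx}(s)\|^2$ at infinity, so I will check that these are encoded in $\mathcal{D}(\mathcal{M}_{g_i})$ and Assumption \ref{assumg} for an arbitrary domain element, not just along trajectories. Granting the lemma as stated for all $U\in\mathcal{D}(\mathcal{A})$, the estimate is immediate.
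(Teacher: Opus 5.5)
Your proposal is correct and is essentially identical to the paper's proof. You pair \eqref{eq:rq} with $U$ in $\mathcal{H}$, use that $\langle i\lambda U,U\rangle_{\mathcal{H}}=i\lambda\|U\|_{\mathcal{H}}^2$ is purely imaginary, and bound the dissipation in \eqref{eq:monotone-explicit} by $\|U\|_{\mathcal{H}}\|\Upsilon\|_{\mathcal{H}}$; your explicit constant $C=\max\{1/\gamma_1,2/\lambda_1,1/\gamma_2,2/\lambda_2\}$ is a harmless refinement.

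Your caution about the integration by parts in $s$ is justified. The paper's $\mathcal{D}(\mathcal{M}_{g_i})$, as written, omits the standard condition $\sigma(0)=0$ (resp.\ $\eta(0)=0$), and this must be added to the domain, since otherwise the term $-\tfrac12 g_1(0)\|\sigma_{xx}(0)\|^2$ appears in $\operatorname{Re}(\sigma_s,\sigma)_{\mathcal{M}_{g_1}}$. The vanishing of $g_i(s)\|\sigma_{xx}(s)\|^2$ at infinity, by contrast, does follow from $\sigma,\sigma_s\in\mathcal{M}_{g_i}$ and $g_i'\le0$.
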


\begin{proof}
By Lemma \ref{lem:monotone}, combined with \eqref{eq:rq}, we deduce
\begin{align*}
\gamma_1\|\psi\|^2+\frac{\lambda_1}{2}\|\sigma\|_{\mathcal{M}_{g_1}}^2
+\gamma_2\|\phi\|^2+\frac{\lambda_2}{2}\|\eta\|_{\mathcal{M}_{g_2}}^2
&\le \operatorname{Re}\langle \mathcal{A} U,U\rangle_{\mathcal{H}}\\
&=-\operatorname{Re}\langle i\lambda U-\mathcal{A}U,U\rangle_{\mathcal{H}}\\
&=-\operatorname{Re}\langle\Upsilon,U\rangle_{\mathcal{H}}
\le \|U\|_{\mathcal{H}}\|\Upsilon\|_{\mathcal{H}}.
\end{align*}
Thus \eqref{eq:hsx_estimate} follows.
\end{proof}

\begin{lemma}\label{eq:u}
Under Assumption \ref{assumg}, there exists a constant $C>0$, independent of
$\lambda$, such that
\begin{align}\label{eq:u_estimate}
\bar{\mu}_1\|u_{xx}\|^2\le C\|U\|_{\mathcal{H}}\|\Upsilon\|_{\mathcal{H}}.
\end{align}
\end{lemma}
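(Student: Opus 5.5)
The plan is to test the second resolvent equation in \eqref{eq:boundedness} against $u$. This produces $\bar{\mu}_1\|u_{xx}\|^2$ directly, and every other term is then controlled by Lemma \ref{eq:hsx} or by $\|U\|_{\mathcal{H}}\|\Upsilon\|_{\mathcal{H}}$. The one delicate point is the term carrying the factor $\lambda$, since the constant must not depend on $\lambda$.

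\textbf{Step 1: the basic identity.} Take the $L^2$-inner product of \eqref{eq:boundedness}$_2$ with $u$. Integrate by parts twice using the hinged boundary conditions, exactly as in Case 2 of condition (i). This gives
\[
(h^2,u)=i\lambda(\psi,u)-\gamma_1(\psi,u)-\bar{\mu}_1\|u_{xx}\|^2-\int_0^{+\infty}g_1(s)\bigl(\sigma_{xx}(s),u_{xx}\bigr)_\Omega\,ds .
\]

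\textbf{Step 2: removing $\lambda$.} This is the main obstacle. Because the inner product is conjugate-linear in the second variable, $i\lambda(\psi,u)=-(\psi,i\lambda u)$. By \eqref{eq:boundedness}$_1$ we have $i\lambda u=h^1-\psi$, so
\[
i\lambda(\psi,u)=\|\psi\|^2-(\psi,h^1).
\]
The right-hand side contains no $\lambda$. The first term satisfies $\|\psi\|^2\le C\|U\|_{\mathcal{H}}\|\Upsilon\|_{\mathcal{H}}$ by Lemma \ref{eq:hsx}. For the second, \eqref{eq:embedding} gives $|(\psi,h^1)|\le \sqrt{c_e}\,\|\psi\|\,\|h^1_{xx}\|\le C\|U\|_{\mathcal{H}}\|\Upsilon\|_{\mathcal{H}}$.

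\textbf{Step 3: the remaining terms.} Each is bounded by \eqref{eq:embedding}, H\"older's inequality and Young's inequality with a small parameter $\epsilon>0$:
\[
|(h^2,u)|\le\sqrt{c_e}\,\|h^2\|\,\|u_{xx}\|\le C\|\Upsilon\|_{\mathcal{H}}\|U\|_{\mathcal{H}},
\]
\[
\gamma_1|(\psi,u)|\le\gamma_1\sqrt{c_e}\,\|\psi\|\,\|u_{xx}\|\le\epsilon\|u_{xx}\|^2+C_\epsilon\|\psi\|^2,
\]
\[
\Bigl|\int_0^{+\infty}g_1(s)\bigl(\sigma_{xx}(s),u_{xx}\bigr)_\Omega\,ds\Bigr|\le\beta_1^{1/2}\|\sigma\|_{\mathcal{M}_{g_1}}\|u_{xx}\|\le\epsilon\|u_{xx}\|^2+C_\epsilon\|\sigma\|_{\mathcal{M}_{g_1}}^2 .
\]
The Young splitting in the middle line is needed because $\|\psi\|\,\|u\|$ is not directly of the form $\|U\|_{\mathcal{H}}\|\Upsilon\|_{\mathcal{H}}$, whereas $\|\psi\|^2$ is. The quantities $\|\psi\|^2$ and $\|\sigma\|_{\mathcal{M}_{g_1}}^2$ are again bounded by Lemma \ref{eq:hsx}.

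\textbf{Step 4: conclusion.} Take real parts in the identity of Step 1 and solve for $\bar{\mu}_1\|u_{xx}\|^2$. Choose $\epsilon=\bar{\mu}_1/4$ so that the $\epsilon\|u_{xx}\|^2$ terms are absorbed into the left-hand side. This yields \eqref{eq:u_estimate}, with a constant depending only on $\gamma_1,\bar{\mu}_1,\beta_1,c_e,\lambda_1$ and not on $\lambda$.
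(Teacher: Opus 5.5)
Your proof is correct and follows the paper's argument: test \eqref{eq:boundedness}$_2$ against $u$, eliminate $i\lambda u$ via \eqref{eq:boundedness}$_1$, apply Young's inequality, absorb the $\|u_{xx}\|^2$ terms, and control $\|\psi\|^2$ and $\|\sigma\|_{\mathcal{M}_{g_1}}^2$ by Lemma \ref{eq:hsx}. Your identity $i\lambda(\psi,u)=\|\psi\|^2-(\psi,h^1)$ is the correctly conjugated form of the paper's expression $(\psi,\overline{h^1})-\|\psi\|^2$; the paper's sign and conjugation slip there does not affect the bound, since both terms are estimated in absolute value.
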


\begin{proof}
From \eqref{eq:boundedness}$_1$ we have $i\lambda u=h^1-\psi$. Taking the
$L^2$-inner product of \eqref{eq:boundedness}$_2$ with $u$ and using this
relation to replace $i\lambda u$, we obtain
\begin{align*}
(h^2,u)
&=
(i\lambda\psi-\gamma_1\psi-\bar{\mu}_1 u_{xxxx}
-\int_0^{+\infty} g_1(s)\sigma_{xxxx}(s)\,ds,u)\\
&=
(\psi,\overline{i\lambda u})-\gamma_1(\psi,u)-\bar{\mu}_1\|u_{xx}\|^2
-\int_0^{+\infty} g_1(s)(\sigma_{xx}(s),u_{xx})_\Omega\,ds\\
&=
(\psi,\overline{h^1})-\|\psi\|^2-\gamma_1(\psi,u)-\bar{\mu}_1\|u_{xx}\|^2
-\int_0^{+\infty} g_1(s)(\sigma_{xx}(s),u_{xx})_\Omega\,ds.
\end{align*}
Here we used $\overline{i\lambda u}=\overline{h^1}-\overline{\psi}$, so that
$(\psi,\overline{i\lambda u})=(\psi,\overline{h^1})-\|\psi\|^2$. Taking real
parts gives
\begin{align*}
\bar{\mu}_1\|u_{xx}\|^2
&=
\operatorname{Re}(\psi,\overline{h^1})-\|\psi\|^2-\gamma_1\operatorname{Re}(\psi,u)
-\operatorname{Re}(h^2,u)\\
&\quad-\operatorname{Re}\int_0^{+\infty} g_1(s)(\sigma_{xx}(s),u_{xx})_\Omega\,ds.
\end{align*}
By the definition of the norm in $\mathcal{H}$,
\[
\|U\|_{\mathcal{H}}\le C\bigl(\|u_{xx}\|+\|\psi\|+\|\sigma\|_{\mathcal{M}_{g_1}}
+\|v_x\|+\|\phi\|+\|\eta\|_{\mathcal{M}_{g_2}}\bigr).
\]
Then, by Poincar\'e's inequality, Young's inequality, and H\"older's
inequality, there exists a positive constant $C$ such that
\begin{align*}
\bar{\mu}_1\|u_{xx}\|^2
&\le C\|h^1\|\|\psi\|+\|\psi\|^2+\gamma_1\|\psi\|\|u\|
+C\|h^2\|\|u\|+\|u_{xx}\|\int_0^{+\infty} g_1(s)\|\sigma_{xx}(s)\|\,ds\\
&\le C\|\Upsilon\|_{\mathcal{H}}\|U\|_{\mathcal{H}}
+\|\psi\|^2+\frac{\gamma_1^2}{4\epsilon}\|\psi\|^2+\epsilon\|u\|^2
+\epsilon_1\|u_{xx}\|^2\\
&\quad+\frac{1}{4\epsilon_1}\left(\int_0^{+\infty} g_1(s)\,ds\right)\|\sigma\|_{\mathcal{M}_{g_1}}^2,
\end{align*}
where we used $\|h^1\|\le\|\Upsilon\|_{\mathcal{H}}$ and
$\|h^2\|\le\|\Upsilon\|_{\mathcal{H}}$. Using \eqref{eq:hsx_estimate} to
control the terms involving $\|\psi\|^2$, $\|\sigma\|_{\mathcal{M}_{g_1}}^2$,
$\|\phi\|^2$ and $\|\eta\|_{\mathcal{M}_{g_2}}^2$, and choosing $\epsilon>0$
and $\epsilon_1>0$ sufficiently small so that
$\epsilon c_e+\epsilon_1<\bar{\mu}_1/2$, where $c_e$ is the embedding
constant in \eqref{eq:embedding}, we can absorb the terms involving
$\|u_{xx}\|^2$ and $\|u\|^2$ into the left-hand side. We obtain
\[
\bar{\mu}_1\|u_{xx}\|^2
\le C\|U\|_{\mathcal{H}}\|\Upsilon\|_{\mathcal{H}}
+\frac12\bar{\mu}_1\|u_{xx}\|^2,
\]
hence \eqref{eq:u_estimate}. The parameters $\epsilon,\epsilon_1$ are chosen
independently of $\lambda$, and so is the constant $C$.
\end{proof}

\begin{lemma}\label{eq:v}
Under Assumption \ref{assumg}, there exists a constant $C>0$, independent of
$\lambda$, such that
\begin{align}\label{eq:v_estimate}
\bar{\mu}_2\|v_x\|^2\le C\|U\|_{\mathcal{H}}\|\Upsilon\|_{\mathcal{H}}.
\end{align}
\end{lemma}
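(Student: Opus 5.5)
The argument mirrors the proof of Lemma \ref{eq:u}, with the beam operator replaced by the string operator and the boundary conditions adapted to $H_0^1(\Omega)$. From \eqref{eq:boundedness}$_4$ we have $i\lambda v=h^4-\phi$. I take the $L^2$-inner product of \eqref{eq:boundedness}$_5$ with $v$. Integrating by parts once with $v=\eta(s)=0$ at $x=0,\ell$ gives
\[
(v_{xx},v)=-\|v_x\|^2,\qquad (\eta_{xx}(s),v)_\Omega=-(\eta_x(s),v_x)_\Omega .
\]
The term $(i\lambda\phi,v)=(\phi,\overline{i\lambda v})$ is rewritten as $(\phi,\overline{h^4})-\|\phi\|^2$, and this is exactly the step that removes every explicit appearance of $\lambda$. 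Taking real parts should then give the identity
\[
\bar{\mu}_2\|v_x\|^2=\operatorname{Re}(\phi,\overline{h^4})-\|\phi\|^2-\gamma_2\operatorname{Re}(\phi,v)+\operatorname{Re}(h^5,v)-\operatorname{Re}\int_0^{+\infty}g_2(s)(\eta_x(s),v_x)_\Omega\,ds,
\]
possibly up to harmless sign changes in front of the $h$-terms.

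Each term on the right is then estimated as follows.
\begin{enumerate}[(i)]
\item The terms $|(\phi,\overline{h^4})|$ and $|(h^5,v)|$ are bounded by $C\|\Upsilon\|_{\mathcal{H}}\|U\|_{\mathcal{H}}$. This uses $\|h^4\|\le\sqrt{c_p}\|h^4_x\|\le C\|\Upsilon\|_{\mathcal{H}}$ and $\|v\|\le\sqrt{c_p}\|v_x\|\le C\|U\|_{\mathcal{H}}$, both from Poincar\'e's inequality \eqref{eq:embedding}.
\item The term $\|\phi\|^2$ is controlled directly by Lemma \ref{eq:hsx}.
\item For $\gamma_2|(\phi,v)|$, Young's inequality gives $\frac{\gamma_2^2}{4\epsilon}\|\phi\|^2+\epsilon c_p\|v_x\|^2$.
\item For the memory term, Cauchy--Schwarz in $s$ followed by Young gives the bound $\epsilon_1\|v_x\|^2+\frac{\beta_2}{4\epsilon_1}\|\eta\|_{\mathcal{M}_{g_2}}^2$.
\end{enumerate}

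Finally I choose $\epsilon,\epsilon_1$ with $\epsilon c_p+\epsilon_1<\bar{\mu}_2/2$, independently of $\lambda$. The $\|v_x\|^2$ terms are then absorbed into the left-hand side. The remaining $\|\phi\|^2$ and $\|\eta\|_{\mathcal{M}_{g_2}}^2$ terms are replaced by $C\|U\|_{\mathcal{H}}\|\Upsilon\|_{\mathcal{H}}$ using \eqref{eq:hsx_estimate}, which yields \eqref{eq:v_estimate}.

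There is no real obstacle. The one point needing care is to use the first resolvent equation to eliminate $i\lambda v$ rather than to estimate it directly. Keeping track of complex conjugation in that substitution ensures the constant stays independent of $\lambda$. One should also check that the boundary terms vanish for elements of $\mathcal{D}(\mathcal{A})$; this holds because $v,\eta(s)\in H_0^1(\Omega)$.
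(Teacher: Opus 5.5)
Your proposal is correct and follows the paper's proof: you eliminate $i\lambda v$ via $i\lambda v=h^4-\phi$, integrate by parts, bound $\gamma_2|(\phi,v)|$ and the memory term by Young with $\epsilon c_p+\epsilon_1<\bar{\mu}_2/2$, and close with Lemma \ref{eq:hsx}. One small slip, which the paper also makes: with the inner product conjugate-linear in the second slot, the substitution actually gives $(i\lambda\phi,v)=-(\phi,i\lambda v)=\|\phi\|^2-(\phi,h^4)$ rather than $(\phi,\overline{h^4})-\|\phi\|^2$, but since every term is then estimated in absolute value, the conclusion is unaffected.
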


\begin{proof}
From \eqref{eq:boundedness}$_4$ we have $i\lambda v=h^4-\phi$. Taking the
$L^2$-inner product of \eqref{eq:boundedness}$_5$ with $v$ and using this
relation to replace $i\lambda v$, we obtain
\begin{align*}
(h^5,v)
&=
(i\lambda\phi-\gamma_2\phi+\bar{\mu}_2 v_{xx}
+\int_0^{+\infty} g_2(s)\eta_{xx}(s)\,ds,v)\\
&=
(\phi,\overline{i\lambda v})-\gamma_2(\phi,v)+\bar{\mu}_2(v_{xx},v)
+\int_0^{+\infty} g_2(s)(\eta_{xx}(s),v)_\Omega\,ds\\
&=
(\phi,\overline{h^4})-\|\phi\|^2-\gamma_2(\phi,v)-\bar{\mu}_2\|v_x\|^2
-\int_0^{+\infty} g_2(s)(\eta_x(s),v_x)_\Omega\,ds.
\end{align*}
Here we used $\overline{i\lambda v}=\overline{h^4}-\overline{\phi}$, so that
$(\phi,\overline{i\lambda v})=(\phi,\overline{h^4})-\|\phi\|^2$, together with
$(v_{xx},v)=-\|v_x\|^2$ and
$(\eta_{xx}(s),v)_\Omega=-(\eta_x(s),v_x)_\Omega$. Taking real parts gives
\begin{align*}
\bar{\mu}_2\|v_x\|^2
=
\operatorname{Re}(\phi,\overline{h^4})-\|\phi\|^2-\gamma_2\operatorname{Re}(\phi,v)
-\operatorname{Re}(h^5,v)-\operatorname{Re}\int_0^{+\infty} g_2(s)(\eta_x(s),v_x)_\Omega\,ds.
\end{align*}
By the definition of the norm in $\mathcal{H}$,
\[
\|U\|_{\mathcal{H}}\le C\bigl(\|u_{xx}\|+\|\psi\|+\|\sigma\|_{\mathcal{M}_{g_1}}
+\|v_x\|+\|\phi\|+\|\eta\|_{\mathcal{M}_{g_2}}\bigr).
\]
Then, by Poincar\'e's inequality, Young's inequality, and H\"older's
inequality, there exists a positive constant $C$ such that
\begin{align*}
\bar{\mu}_2\|v_x\|^2
&\le C\|h^4\|\|\phi\|+\|\phi\|^2+\gamma_2\|\phi\|\|v\|
+C\|h^5\|\|v\|+\|v_x\|\int_0^{+\infty} g_2(s)\|\eta_x(s)\|\,ds\\
&\le C\|\Upsilon\|_{\mathcal{H}}\|U\|_{\mathcal{H}}
+\|\phi\|^2+\frac{\gamma_2^2}{4\delta}\|\phi\|^2+\delta\|v\|^2
+\delta_1\|v_x\|^2\\
&\quad+\frac{1}{4\delta_1}\left(\int_0^{+\infty} g_2(s)\,ds\right)\|\eta\|_{\mathcal{M}_{g_2}}^2,
\end{align*}
where we used $\|h^4\|\le\|\Upsilon\|_{\mathcal{H}}$ and
$\|h^5\|\le\|\Upsilon\|_{\mathcal{H}}$. Using \eqref{eq:hsx_estimate} to
control the terms involving $\|\psi\|^2$, $\|\sigma\|_{\mathcal{M}_{g_1}}^2$,
$\|\phi\|^2$ and $\|\eta\|_{\mathcal{M}_{g_2}}^2$, and choosing $\delta>0$ and
$\delta_1>0$ sufficiently small so that
$\delta c_p+\delta_1<\bar{\mu}_2/2$, where $c_p$ is the Poincar\'e
constant in \eqref{eq:embedding}, we can absorb the terms involving
$\|v_x\|^2$ and $\|v\|^2$ into the left-hand side. We obtain
\[
\bar{\mu}_2\|v_x\|^2
\le C\|U\|_{\mathcal{H}}\|\Upsilon\|_{\mathcal{H}}
+\frac12\bar{\mu}_2\|v_x\|^2,
\]
hence \eqref{eq:v_estimate}. The parameters $\delta,\delta_1$ are chosen
independently of $\lambda$, and so is the constant $C$.
\end{proof}

Finally, combining Lemmas \ref{eq:hsx}--\ref{eq:v}, we obtain
\begin{align*}
\|U\|_{\mathcal{H}}^2
&=\bar{\mu}_1\|u_{xx}\|^2+\|\psi\|^2+\|\sigma\|_{\mathcal{M}_{g_1}}^2
+\bar{\mu}_2\|v_x\|^2+\|\phi\|^2+\|\eta\|_{\mathcal{M}_{g_2}}^2\\
&\le C\|U\|_{\mathcal{H}}\|\Upsilon\|_{\mathcal{H}},
\end{align*}
which yields \eqref{bound} after dividing by $\|U\|_{\mathcal{H}}$ (the case
$\|U\|_{\mathcal{H}}=0$ being trivial). The constant $C$ is independent of
$\lambda$. Thus condition (ii) of the Gearhart--Pr\"uss theorem is verified,
and the proof of Theorem \ref{Exponential stability-precise} is complete.
\end{proof}

\section{Global Attractor}\label{sec:global}

In this section we prove the existence of a compact global attractor for the
solution semigroup \(S(t)\) associated with \eqref{model-abstract}. Our proof
is based on the quasi-stability framework of Chueshov--Lasiecka
\cite[Chapter 7]{Chueshov2008}. A key point is that the nonlinearity acts only
on the elastic component, while the memory component is governed by the
transport equations and the exponentially decaying kernels. This structure,
together with the exponential stability of the linear semigroup established in
Theorem \ref{Exponential stability-precise}, allows us to address the point in
\cite[Proposition~4.1]{Mukiawa2025} that was discussed in the previous section.

We first state the main result of this section.

\begin{theorem}[Existence of the global attractor]\label{Global attractor-precise}
Under Assumptions \ref{assumh} and \ref{assumg}, the solution semigroup
\(S(t)\) associated with \eqref{eq:model}, whose precise definition is given
in \eqref{model-abstract}, possesses a compact global attractor \(\frA\) in
\(\mathcal{H}\).
\end{theorem}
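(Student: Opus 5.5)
The argument will follow the standard route: a bounded absorbing set, plus asymptotic smoothness, gives a compact global attractor (Hale / Chueshov--Lasiecka). Dissipativity is already available from \eqref{eq:absorbing}, with the absorbing ball $\mathcal{B}_0$. It therefore remains to prove that $S(t)$ is quasi-stable on a bounded absorbing set in the sense of Definition~\ref{def:qstab}, and then to invoke Theorem~\ref{thm:CL-asym}. To have a positively invariant set to work on, I will use $\mathcal{B}:=\overline{\bigcup_{t\ge t_0(\mathcal{B}_0)}S(t)\mathcal{B}_0}$. This set is bounded, positively invariant, absorbing, and contained in $\mathcal{B}_0$. Trajectories starting in $\mathcal{B}$ stay in $\mathcal{B}_0\subset\mathcal{B}_1$, so Remark~\ref{rem:B0-bounded} applies along them.

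The key step is a splitting of the difference of two trajectories via Duhamel's formula. For $U_0,V_0\in\mathcal{B}$, set $W(t)=S(t)U_0-S(t)V_0$. Then
\[
W(T)=e^{-\mathcal{A}T}(U_0-V_0)+\int_0^T e^{-\mathcal{A}(T-s)}\bigl[F(S(s)U_0)-F(S(s)V_0)\bigr]\,ds .
\]
By Theorem~\ref{Exponential stability-precise}, the first term has norm at most $\alpha e^{-\beta T}\|U_0-V_0\|_{\mathcal{H}}$. I fix $T$ so large that $2\alpha^2e^{-2\beta T}=:q^2<1$. I define $K_T(U_0):=\sqrt2\int_0^Te^{-\mathcal{A}(T-s)}F(S(s)U_0)\,ds$. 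Then $\|W(T)\|^2\le(a+b)^2\le 2a^2+2b^2$ gives exactly \eqref{eq:qstab-ineq}; the factor $\sqrt2$ in $K_T$ absorbs the constant. This replaces the unjustified absorption in \cite{Mukiawa2025}: the initial difference now carries the decaying factor $e^{-\beta T}$.

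The main obstacle is showing that $K_T(\mathcal{B})$ is relatively compact in $\mathcal{H}$. I would first try the structural fact that $F(U)$ has nonzero entries only in the velocity slots $\psi,\phi$. Those entries are $-k[u-v]^+-h_1(u)+f_1$ and $k[u-v]^+-h_2(v)+f_2$, where $u\in H^2_a$ and $v\in H^1_0$ range over bounded sets along trajectories. In one dimension these embed into $C(\overline\Omega)$, so the entries are bounded in $L^2$. Bounded in $L^2$ is not compact, however, so I will gain regularity through time. The time derivative of $s\mapsto F(S(s)U_0)$ involves $h_1'(u)u_t$, $h_2'(v)v_t$ and $k\chi_{\{u>v\}}(u_t-v_t)$. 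These are bounded in $L^2(\Omega)$ uniformly for $U_0\in\mathcal{B}$, because $h_i'$ is bounded on bounded sets and $u,v$ are uniformly bounded. Integrating by parts in $s$ then gives
\[
\int_0^Te^{-\mathcal{A}(T-s)}G(s)\,ds=\mathcal{A}^{-1}\Bigl[G(T)-e^{-\mathcal{A}T}G(0)-\int_0^Te^{-\mathcal{A}(T-s)}G'(s)\,ds\Bigr],
\]
with $\mathcal{A}^{-1}$ bounded because $0\in\varrho(\mathcal{A})$ by Theorem~\ref{Exponential stability-precise}. (This formula is first checked for smooth data and then extended by density, using local Lipschitz continuity.) The bracket is bounded in $\mathcal{H}$, so $K_T(\mathcal{B})\subset\mathcal{A}^{-1}(\text{bounded set})$.

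The delicate point is that $\mathcal{A}^{-1}$ is not compact on $\mathcal{H}$, since the memory spaces $\mathcal{M}_{g_i}$ are not compactly embedded. So I must exploit the fact that the bracket has special structure: its $\sigma$ and $\eta$ components come only from the semigroup acting on velocity-only forcing. I would examine $\mathcal{A}^{-1}$ applied to data $(0,p,0,0,r,0)$ with $p,r\in L^2$. Solving $\mathcal{A}U=(0,p,0,0,r,0)$ gives $\psi=\phi=0$ and $\sigma_s=0$, hence $\sigma=\eta=0$. The elastic parts then satisfy $u\in H^4$ and $v\in H^2$, which are compactly embedded in $H^2_a$ and $H^1_0$ respectively.

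For the remaining term $\int_0^Te^{-\mathcal{A}(T-s)}G'(s)\,ds$, one option is to use the explicit transport formula for the memory component, $\sigma(T,s)=\int_{\max(0,T-s)}^{T}\psi(\tau)\,d\tau$ (plus the decaying initial part, which is absent here). This memory component is then controlled in $\mathcal{M}_{g_1}$ by elastic quantities with extra regularity, and compactness in $\mathcal{M}_{g_1}$ follows from the standard criterion of Pata--Zucchi (boundedness in $\mathcal{D}(\mathcal{M}_{g_1})$ with uniform tail control). The alternative is to write $K_T=K_T^{(1)}+K_T^{(2)}$, where $K_T^{(1)}$ has a compact elastic part and $K_T^{(2)}$ is small. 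I expect this memory-compactness step to be the hardest part of the argument.

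Once the relatively compact $\mathcal{K}\supset K_T(\mathcal{B})$ is in hand, Theorem~\ref{thm:CL-asym} gives asymptotic smoothness. Combined with \eqref{eq:absorbing}, the classical existence theorem then yields the compact global attractor $\frA=\omega(\mathcal{B})$.
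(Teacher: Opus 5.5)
Your reduction is sound, and in two places it is more careful than the paper's. First, you work on a positively invariant absorbing set. Second, the factor $\sqrt2$ honestly accounts for $\|a+b\|^2\le2\|a\|^2+2\|b\|^2$.

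The time integration by parts is also legitimate. Along orbits in $\mathcal{B}$ one has $u_t=\psi$ and $v_t=\phi$ bounded in $L^2(\Omega)$, $u,v$ bounded in $L^\infty(\Omega)$, and $r\mapsto r^+$ Lipschitz. Hence $G(s)=F(S(s)U_0)$ is Lipschitz from $[0,T]$ into $\mathcal{H}$, uniformly in $U_0$. Your formula then follows from $G(s)=G(0)+\int_0^sG'$, Fubini, and $\int_0^te^{-\mathcal{A}\tau}x\,d\tau=\mathcal{A}^{-1}(I-e^{-\mathcal{A}t})x$; no density argument is needed.

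The gap is exactly at the step you flag as hardest. You never show that $\mathcal{A}^{-1}e^{-\mathcal{A}T}G(0)$ and $\mathcal{A}^{-1}\int_0^Te^{-\mathcal{A}(T-s)}G'(s)\,ds$ range over a relatively compact set; you only list two options.

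The first option would not work as sketched. Boundedness in $\mathcal{D}(\mathcal{M}_{g_1})$ plus uniform tail control does not give compactness: take $\sigma_n(s)=\min\{s,1\}\,w_n$ with $(w_n)_{xx}$ orthonormal in $L^2(\Omega)$. A genuine Pata--Zucchi criterion needs a bound in a space compactly embedded in $H_a^2(\Omega)$, and nothing in your setting provides one. Indeed, $\mathcal{A}U=Y$ gives $\sigma(s)=\int_0^s y^3(r)\,dr-s\,y^1$, so there is no spatial gain in the memory slot, and the linear semigroup is not smoothing. The second option is the paper's route rather than a completion of yours.

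The missing idea is one line. Since $\mathcal{A}^{-1}$ commutes with $e^{-\mathcal{A}t}$, apply it while the data are still velocity-only:
\[
\mathcal{A}^{-1}\Bigl[G(T)-e^{-\mathcal{A}T}G(0)-\int_0^Te^{-\mathcal{A}(T-s)}G'(s)\,ds\Bigr]
=\mathcal{A}^{-1}G(T)-e^{-\mathcal{A}T}\mathcal{A}^{-1}G(0)-\int_0^Te^{-\mathcal{A}(T-s)}\mathcal{A}^{-1}G'(s)\,ds .
\]
By your own computation for velocity-only data, the vectors $\mathcal{A}^{-1}G(T)$, $\mathcal{A}^{-1}G(0)$ and (for a.e.\ $s$) $\mathcal{A}^{-1}G'(s)$ all lie in the compact set $\mathcal{K}_0=\{(u,0,0,v,0,0):\|u\|_{H^4}+\|v\|_{H^2}\le C\}$, uniformly in $U_0\in\mathcal{B}$. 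The map $(\tau,x)\mapsto e^{-\mathcal{A}\tau}x$ is jointly continuous on $[0,T]\times\mathcal{H}$, so $\mathcal{K}_1=\{e^{-\mathcal{A}\tau}x:\tau\in[0,T],\ x\in\mathcal{K}_0\}$ is compact. The Bochner integral therefore lies in $T\,\overline{\mathrm{conv}}\,\mathcal{K}_1$, which is compact by Mazur's theorem. Hence $K_T(\mathcal{B})$ is relatively compact, with no memory-compactness criterion at all.

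With this added, your proof is complete and genuinely different from the paper's. The paper splits off a short-time piece $K_T^{(2)}$ and seeks compactness of $K_T^{(1)}$ in two ways: a smoothing estimate for the elastic block $\mathcal{A}_e$, and an explicit transport representation with tail and H\"older-in-$s$ bounds for the memory part. Your route uses only $h_i\in C^1$, the embeddings $H^2_a(\Omega),H^1_0(\Omega)\hookrightarrow C(\overline{\Omega})$, time-Lipschitz continuity of $F$ along orbits, and $0\in\varrho(\mathcal{A})$. In particular it never needs analytic smoothing for $-\mathcal{A}_e$. That smoothing is in fact unavailable here: the eigenvalues of the viscously damped beam and string blocks are $\gamma_i/2\pm i\omega_n$ with $\omega_n\to+\infty$, so they lie in no sector.
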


The proof proceeds in three steps. We first recall the dissipativity and the
existence of a bounded absorbing set, which were established in
\cite{Mukiawa2025}. We then prove that the solution semigroup is quasi-stable
on the absorbing ball, using the exponential decay of the linear semigroup and
the compactness of the nonlinear remainder. Finally, we combine
quasi-stability with the Chueshov--Lasiecka theorem to obtain asymptotic
smoothness and hence the existence of a compact global attractor.

Throughout this section, we denote by
\[
e^{-\mathcal{A} t}:\mathcal{H}\to\mathcal{H}
\]
the \emph{linear} \(C_0\)-semigroup generated by \(-\mathcal{A}\), and by
\[
S(t):\mathcal{H}\to\mathcal{H},\qquad S(t)U_0:=U(t),
\]
the \emph{nonlinear} solution semigroup associated with
\eqref{model-abstract}. Thus \(e^{-\mathcal{A} t}\) is linear, while \(S(t)\) is
nonlinear and satisfies
\begin{equation}\label{eq:mild-general}
S(t)U_0=e^{-\mathcal{A} t}U_0
+\int_0^t e^{-\mathcal{A}(t-s)}F(S(s)U_0)\,ds.
\end{equation}

\begin{proposition}\label{prop:qstab}
Under Assumptions \ref{assumh} and \ref{assumg}, the solution semigroup
\(S(t)\) is quasi-stable on the absorbing ball \(\mathcal{B}_0\) in the sense
of Definition \ref{def:qstab}.
\end{proposition}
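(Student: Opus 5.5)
We argue through the variation-of-constants formula \eqref{eq:mild-general}. Fix \(U_0,V_0\in\mathcal{B}_0\). Since \(\mathcal{B}_0\) is absorbing, we may replace it by a bounded, positively invariant absorbing set \(\mathcal{B}\) (for instance \(\bigcup_{t\ge t_0(\mathcal{B}_0)}S(t)\mathcal{B}_0\)) and enlarge \(R_1\) in Remark \ref{rem:B0-bounded} so that \(\mathcal{B}\subset\mathcal{B}_1\). Then both trajectories stay in \(\mathcal{B}_1\) for all \(t\ge0\). Subtracting the two mild formulas gives
\[
S(T)U_0-S(T)V_0=e^{-\mathcal{A}T}(U_0-V_0)+\int_0^T e^{-\mathcal{A}(T-s)}\bigl[F(S(s)U_0)-F(S(s)V_0)\bigr]\,ds.
\]
By Theorem \ref{Exponential stability-precise}, the first term has norm at most \(\alpha e^{-\beta T}\|U_0-V_0\|_{\mathcal{H}}\). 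We choose \(T\) so large that \(2\alpha^2e^{-2\beta T}\le q^2<1\); this is precisely the decay factor missing in \eqref{eq:Mukiawa424}. We then set
\[
K_T(U_0):=\int_0^T e^{-\mathcal{A}(T-s)}F(S(s)U_0)\,ds=S(T)U_0-e^{-\mathcal{A}T}U_0 .
\]
Using \((a+b)^2\le 2a^2+2b^2\) yields \eqref{eq:qstab-ineq}, with constant \(2\) in front of \(\|K_T(U_0)-K_T(V_0)\|^2\). This constant is removed by rescaling \(K_T\) by \(\sqrt2\), which preserves relative compactness.

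It remains to show that \(K_T(\mathcal{B})\) is relatively compact in \(\mathcal{H}\). This is the main obstacle. Write \(F(U)=(0,F_2(U),0,0,F_5(U),0)\). Since \(F(S(s)U_0)\) is uniformly bounded for \(s\in[0,T]\) by Remark \ref{rem:B0-bounded}, a trivial bound only gives boundedness, so we need extra regularity. The plan is to show that \(F_2,F_5\) take values bounded in \(H_0^1(\Omega)\), not merely in \(L^2\). Indeed, \(u\in H_a^2\subset C^1\) and \(v\in H_0^1\) are bounded on \(\mathcal{B}_1\). The function \([u-v]^+\) is Lipschitz in its argument, so \(\partial_x[u-v]^+\) is bounded in \(L^2\). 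Likewise \(h_i\in C^1\) applied to functions bounded in \(L^\infty\) gives \(\|\partial_x h_i(u)\|\le \sup|h_i'|\,\|u_x\|\). The term \(f_i\) is constant in time, and its contribution \(\int_0^Te^{-\mathcal{A}(T-s)}(0,f_1,0,0,f_2,0)\,ds\) is a single fixed vector, which is trivially compact. Hence \(F(S(s)\mathcal{B})\) lies, up to that fixed vector, in a bounded subset of \(\mathcal{H}^1:=\{0\}\times H_0^1\times\{0\}\times\{0\}\times H_0^1\times\{0\}\) (adjusting the boundary traces as needed, e.g.\ via \(h_i(0)\)).

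We then need that \(\int_0^T e^{-\mathcal{A}(T-s)}G(s)\,ds\), for \(G\) bounded in this smoother space, lies in a compact subset of \(\mathcal{H}\). For the \((u,\psi,v,\phi)\) components this is the standard gain of one derivative for the damped beam/wave equations with memory. One can obtain it by applying the semigroup to \(\mathcal{A}^{-1}\)-type smoother data and integrating by parts in \(s\), which uses that \(\mathcal{A}^{-1}\) exists by the proof of Theorem \ref{Exponential stability-precise} (\(0\in\varrho(\mathcal{A})\)). Equivalently, we would show the image is bounded in \(\mathcal{D}(\mathcal{A}^{\theta})\)-like spaces for the elastic part, then use compact embeddings \(H^3\hookrightarrow H^2\) and \(H^2\hookrightarrow H^1\).

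For the memory components, \(\sigma(T,s)=\int\) of past \(\psi\) values via the transport equation. This is delicate because \(\mathcal{M}_{g_i}\) embeddings are not compact. We would use the Pata–Zucchi compactness criterion: a set of histories that is bounded in \(L^2_{g}(\mathbb{R}^+;H^3)\cap H^1_{g}(\mathbb{R}^+;H^2)\) and has uniformly small tails \(\sup\int_{(0,1/N)\cup(N,\infty)}g_1\|\sigma_{xx}\|^2\,ds\) is relatively compact. The tail smallness follows from \eqref{ass1.9} because the memory part of \(K_T\) starts from zero history. Indeed, \(\sigma\) of \(K_T\) vanishes for \(s>T\) up to the decaying contribution, and is controlled by \(s\sup\|\psi_{xx}\|\) near \(s=0\). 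I expect this step to be the hardest, and I would first try to control it by the explicit representation \(\sigma(T,s)=u(T)-u(T-s)\) for \(s\le T\).
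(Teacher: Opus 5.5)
Your outer structure matches the paper's: the Duhamel splitting $S(T)U_0-S(T)V_0=e^{-\mathcal{A}T}(U_0-V_0)+K_T(U_0)-K_T(V_0)$, the contraction factor from the exponential stability of $e^{-\mathcal{A}t}$, and the reduction to relative compactness of $K_T(\mathcal{B}_0)$. Your rescaling of $K_T$ by $\sqrt2$ also handles the factor $2$ that the paper's ``$q<1/2$'' leaves implicit.

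The two routes part at the source of that compactness. The paper uses nothing about $F$ beyond the bound $M$ on its $L^2$ velocity components. It tries to get compactness from the linear flow, via three claims: analytic smoothing of $e^{-\mathcal{A}_e t}$, compactness of the memory feedback $B_1$, and a Rellich step $H^{1/2}(0,R;H_a^2(\Omega))\hookrightarrow L^2(0,R;H_a^2(\Omega))$. You get it from the nonlinearity instead. After removing the fixed vector $F_0=(0,f_1-h_1(0),0,0,f_2-h_2(0),0)$, whose contribution cancels in $K_T(U_0)-K_T(V_0)$, the velocity components of $F$ are bounded in $H_0^1(\Omega)$ on bounded sets.

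Yours is the route that can actually be completed. None of the paper's three claims holds:
\begin{itemize}
\item For large $n$ the eigenvalues $\frac{\gamma_1}{2}\pm i\bigl(\bar\mu_1(n\pi/\ell)^4-\frac{\gamma_1^2}{4}\bigr)^{1/2}$ of the beam block lie on the line $\operatorname{Re}\lambda=\gamma_1/2$ (for the string, $\gamma_2/2$). So $\mathcal{A}_e$ is not sectorial, and $e^{-\mathcal{A}_e t}$ is not even compact.
\item $B_1$ is not compact: take histories constant in $s$ built on orthonormal modes.
\item The Rellich step fails because $H_a^2(\Omega)$ is infinite-dimensional.
\end{itemize}
More decisively, any argument using only $M$ would make the Duhamel map compact on all forcings bounded in the $L^2$ velocity slots. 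That is false: driving the $n$-th sine mode at its eigenfrequency gives mutually orthogonal responses whose $\mathcal{H}$-norms are bounded below uniformly in $n$. So extra structure of $F$ is indispensable, and the $H_0^1$-regularity you identified is the right one.

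Your sketch still needs repair in two places, using the same ingredients.

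\textbf{The extra derivative.} Do not obtain it via $\mathcal{A}^{-1}$ and integration by parts in time. That only gives boundedness in $\mathcal{D}(\mathcal{A})$, which is not compactly embedded in $\mathcal{H}$ and controls only the combination $\bar\mu_1u+\int_0^{+\infty}g_1(s)\sigma(s)\,ds$. Instead, note that every spatial operator in $\mathcal{A}$ is a function of the Dirichlet Laplacian (the hinged $\partial_x^4$ is its square), and there is no linear coupling between the beam and string blocks. Hence $\Lambda:=(-\partial_{xx})^{1/2}$ (Dirichlet), applied componentwise and pointwise in $s$, commutes with $e^{-\mathcal{A}t}$. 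Set $Z(t):=\int_0^t e^{-\mathcal{A}(t-s)}\bigl(F(S(s)U)-F_0\bigr)\,ds$. Contractivity (Lemma~\ref{lem:monotone}) then gives $\sup_{0\le t\le T}\|\Lambda Z(t)\|_{\mathcal{H}}\le T\sup_{0\le s\le T}\|\Lambda(F(S(s)U)-F_0)\|_{\mathcal{H}}$. So, uniformly on $[0,T]$, $u_Z$ is bounded in $H^3$, $\psi_Z$ and $\phi_Z$ in $H_0^1$, and $v_Z$ in $H^2$. This settles the elastic components.

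\textbf{The memory step asks for too much.} Since $\partial_s\sigma_Z(T,s)=\psi_Z(T-s)\mathbf{1}_{\{s<T\}}$ is only $H_0^1$-valued:
\begin{itemize}
\item The Pata--Zucchi lemma must be applied with $\partial_s\sigma$ bounded in $L^2_{g_1}(\mathbb{R}^+;H_0^1)$, not in an $H^2$-valued space.
\item The small-$s$ tail cannot come from $s\sup\|\psi_{xx}\|$. It comes instead from $\|\sigma_{Z,xx}(T,s)\|\le 2\sup_{t\le T}\|u_{Z,xx}(t)\|$ together with $\int_0^{1/N}g_1\to0$.
\item $\sigma_Z(T,s)=u_Z(T)\neq0$ for $s\ge T$; the large-$s$ tail is small only because $\int_N^{+\infty}g_1\to0$.
\end{itemize}
A simpler alternative avoids Pata--Zucchi altogether. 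The family $\{u_Z\}$ is bounded in $C([0,T];H^3)$ and uniformly Lipschitz into $H_0^1$. By Arzel\`a--Ascoli and interpolation it is relatively compact in $C([0,T];H_a^2)$. The map $w\mapsto w(T)-w((T-\cdot)^+)$ is bounded from $C([0,T];H_a^2)$ into $\mathcal{M}_{g_1}$, which gives compactness of the memory components.

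Finally, shrinking $\mathcal{B}_0$ to an invariant subset is unnecessary. All you use is that trajectories starting in $\mathcal{B}_0$ stay bounded on $[0,T]$.
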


\begin{proof}
Let \(U_0,V_0\in\mathcal{B}_0\) and set
\[
U(t):=S(t)U_0,\qquad V(t):=S(t)V_0,\qquad W(t):=U(t)-V(t),
\]
and
\[
\Phi(t):=F(U(t))-F(V(t)).
\]
Then \(W\) satisfies
\[
\partial_t W+\mathcal{A}W=\Phi(t),\qquad W(0)=U_0-V_0,
\]
in the mild sense. By the variation-of-constants formula for the linear
semigroup \(e^{-\mathcal{A} t}\),
\begin{equation}\label{eq:Duhamel}
W(T)
=e^{-\mathcal{A}T}(U_0-V_0)
+\int_0^T e^{-\mathcal{A}(T-s)}\Phi(s)\,ds.
\end{equation}
Define the \emph{single-variable} nonlinear remainder
\begin{equation}\label{eq:KT-def}
K_T(U):=\int_0^T e^{-\mathcal{A}(T-s)}F(S(s)U)\,ds.
\end{equation}
Then
\[
K_T(U_0)-K_T(V_0)
=\int_0^T e^{-\mathcal{A}(T-s)}\Phi(s)\,ds,
\]
so that \eqref{eq:Duhamel} becomes
\begin{equation}\label{eq:W-split}
W(T)=e^{-\mathcal{A}T}(U_0-V_0)+K_T(U_0)-K_T(V_0).
\end{equation}

By Theorem \ref{Exponential stability-precise}, there exist \(\alpha\ge1\) and
\(\beta>0\) such that
\begin{equation}\label{eq:linear-decay}
\|e^{-\mathcal{A}\tau}\|_{\mathcal{L}(\mathcal{H})}\le \alpha e^{-\beta\tau},
\qquad \forall \tau\ge0.
\end{equation}
Choose \(T>0\) sufficiently large so that
\begin{equation}\label{eq:T-choice}
q:=\alpha e^{-\beta T}<\frac12.
\end{equation}
Then
\begin{equation}\label{eq:L-small}
\|e^{-\mathcal{A}T}(U_0-V_0)\|_{\mathcal{H}}^2
\le q^2\|U_0-V_0\|_{\mathcal{H}}^2.
\end{equation}

It remains to prove that \(K_T(\mathcal{B}_0)\) is relatively compact in
\(\mathcal{H}\). We split the proof into several steps.

\medskip
\noindent\textbf{Step 1: Uniform boundedness of the nonlinear orbits on
\([0,T]\).}
We first show that the orbits starting from \(\mathcal{B}_0\) remain uniformly
bounded on the finite time interval \([0,T]\). Let \(U_0\in\mathcal{B}_0\) and
\(U(t)=S(t)U_0\). Since \(\mathcal{B}_0\) is bounded and \(F\) is locally
Lipschitz, the standard local existence theory for the mild equation
\eqref{eq:mild-general} applies. Let \(R_1>R_0\) be as in
Remark \ref{rem:B0-bounded}, and let \(L_1\) be the corresponding Lipschitz
constant of \(F\) on \(\mathcal{B}_1\).

We claim that the solution \(U(t)\) remains in \(\mathcal{B}_1\) for all
\(0\le t\le T\), provided \(R_1\) is chosen sufficiently large. Indeed, as long
as \(\|U(t)\|_{\mathcal{H}}\le R_1\), the mild formulation gives
\[
\begin{aligned}
\|U(t)\|_{\mathcal{H}}
&\le \|e^{-\mathcal{A}t}\|_{\mathcal{L}(\mathcal{H})}\|U_0\|_{\mathcal{H}}
+\int_0^t \|e^{-\mathcal{A}(t-s)}\|_{\mathcal{L}(\mathcal{H})}
\|F(U(s))\|_{\mathcal{H}}\,ds\\
&\le \alpha e^{-\beta t}R_0
+\alpha\int_0^t e^{-\beta(t-s)}
\bigl(L_1\|U(s)\|_{\mathcal{H}}+\|F(0)\|_{\mathcal{H}}\bigr)\,ds.
\end{aligned}
\]
Multiplying by \(e^{\beta t}\), we obtain
\[
e^{\beta t}\|U(t)\|_{\mathcal{H}}
\le \alpha R_0
+\alpha\int_0^t e^{\beta s}
\bigl(L_1\|U(s)\|_{\mathcal{H}}+\|F(0)\|_{\mathcal{H}}\bigr)\,ds.
\]
By Gronwall's inequality,
\begin{equation}\label{eq:Gronwall}
\|U(t)\|_{\mathcal{H}}
\le
\Bigl(\alpha R_0 e^{-\beta t}
+\frac{\alpha\|F(0)\|_{\mathcal{H}}}{\beta}
\bigl(1-e^{-\beta t}\bigr)\Bigr)
e^{\alpha L_1 t}
\le
C_1 e^{\alpha L_1 T},
\qquad
0\le t\le T,
\end{equation}
where
\[
C_1:=\alpha R_0+\frac{\alpha\|F(0)\|_{\mathcal{H}}}{\beta}.
\]
Now choose \(R_1\) so large that
\begin{equation}\label{eq:R1-choice}
R_1>C_1 e^{\alpha L_1 T}.
\end{equation}
Suppose, for contradiction, that there exists \(t_*\in(0,T]\) such that
\(\|U(t_*)\|_{\mathcal{H}}=R_1\) and \(\|U(t)\|_{\mathcal{H}}\le R_1\) for
\(0\le t\le t_*\). Then \eqref{eq:Gronwall} applies on \([0,t_*]\) and gives
\[
R_1=\|U(t_*)\|_{\mathcal{H}}\le C_1 e^{\alpha L_1 T}<R_1,
\]
a contradiction. Hence \(\|U(t)\|_{\mathcal{H}}<R_1\) for all \(0\le t\le T\),
and the solution remains in \(\mathcal{B}_1\). Setting
\begin{equation}\label{eq:R1}
R_*:=C_1 e^{\alpha L_1 T},
\end{equation}
we obtain
\[
\|S(t)U_0\|_{\mathcal{H}}\le R_*,
\qquad
U_0\in\mathcal{B}_0,\quad 0\le t\le T.
\]
Since \(F\) maps bounded sets into bounded sets, it follows that
\begin{equation}\label{eq:M-bound}
M_F:=\sup_{U\in\mathcal{B}_0}\sup_{0\le s\le T}\|F(S(s)U)\|_{\mathcal{H}}<+\infty.
\end{equation}
For notational simplicity, in the sequel we write \(M\) for \(M_F\). Moreover,
from the explicit form \eqref{F},
\[
F(U)=
\begin{pmatrix}
0\\
-k[u-v]^+ - h_1(u) + f_1\\
0\\
0\\
k[u-v]^+ - h_2(v) + f_2\\
0
\end{pmatrix},
\]
we see that \(F(U)\) has nonzero components only in the second and fifth
positions. Thus
\begin{equation}\label{eq:F-subspace}
F(U)\in \{0\}\times L^2(\Omega)\times\{0\}\times\{0\}\times L^2(\Omega)\times\{0\}.
\end{equation}
In particular, the memory components of \(F(U)\) vanish identically.

\medskip
\noindent\textbf{Step 2: Splitting of \(K_T\).}
Fix \(\sigma\in(0,1]\) and split
\begin{equation}\label{eq:KT-split}
K_T(U)
=\underbrace{\int_0^{T-\sigma}e^{-\mathcal{A}(T-s)}F(S(s)U)\,ds}_{=:K_T^{(1)}(U)}
+\underbrace{\int_{T-\sigma}^T e^{-\mathcal{A}(T-s)}F(S(s)U)\,ds}_{=:K_T^{(2)}(U)}.
\end{equation}
The second term is small. Indeed, since \(e^{-\mathcal{A}t}\) is a
\(C_0\)-semigroup, the quantity
\[
C_0:=\sup_{0\le \tau\le 1}\|e^{-\mathcal{A}\tau}\|_{\mathcal{L}(\mathcal{H})}
\]
is finite. For \(0<\sigma\le 1\) and \(T-\sigma\le s\le T\), we have
\(0\le T-s\le\sigma\le1\), so that
\[
\|e^{-\mathcal{A}(T-s)}\|_{\mathcal{L}(\mathcal{H})}\le C_0.
\]
Therefore
\begin{equation}\label{eq:K2-small}
\|K_T^{(2)}(U)\|_{\mathcal{H}}
\le \int_{T-\sigma}^T \|e^{-\mathcal{A}(T-s)}F(S(s)U)\|_{\mathcal{H}}\,ds
\le C_0 M \sigma.
\end{equation}
Thus, for every \(\varepsilon>0\), choosing \(\sigma>0\) so small that
\(C_0 M\sigma<\varepsilon\), we obtain
\begin{equation}\label{eq:K2-eps}
\sup_{U\in\mathcal{B}_0}\|K_T^{(2)}(U)\|_{\mathcal{H}}<\varepsilon.
\end{equation}
It remains to prove that \(K_T^{(1)}(\mathcal{B}_0)\) is relatively compact
in \(\mathcal{H}\).

\medskip
\noindent\textbf{Step 3: Representation of \(K_T^{(1)}\).}
Write
\begin{equation}\label{eq:K1-representation}
K_T^{(1)}(U)=e^{-\mathcal{A}\sigma}\Psi_\sigma(U),
\qquad
\Psi_\sigma(U):=\int_0^{T-\sigma}e^{-\mathcal{A}(T-s-\sigma)}F(S(s)U)\,ds.
\end{equation}
We decompose \(\Psi_\sigma(U)\) according to the state space:
\[
\Psi_\sigma(U)=
\bigl(\Psi_\sigma^u,\Psi_\sigma^\psi,\Psi_\sigma^\sigma,
\Psi_\sigma^v,\Psi_\sigma^\phi,\Psi_\sigma^\eta\bigr).
\]
Since \(F(S(s)U)\) has zero memory components by \eqref{eq:F-subspace}, the
memory components \(\Psi_\sigma^\sigma,\Psi_\sigma^\eta\) are generated by the
elastic velocities via the transport equations. The key point is that
the elastic components of \(\Psi_\sigma(U)\) gain regularity from the elliptic
part of \(\mathcal{A}\), while the memory components gain compactness from the
exponential decay of \(g_i\) and the compact embedding of \(H_a^2(\Omega)\)
into \(L^2(\Omega)\).

\medskip
\noindent\textbf{Step 4: Regularity of the elastic components.}
Let us first consider the elastic part of \(\Psi_\sigma(U)\). We begin by
making the block structure of \(\mathcal{A}\) precise. Write
\[
\mathcal{H}=\mathcal{H}_e\oplus\mathcal{H}_m,
\qquad
\mathcal{H}_e=H_a^2(\Omega)\times L^2(\Omega)\times H_0^1(\Omega)\times L^2(\Omega),
\qquad
\mathcal{H}_m=\mathcal{M}_{g_1}\times\mathcal{M}_{g_2}.
\]
On \(\mathcal{D}(\mathcal{A})\), the operator \(\mathcal{A}\) admits the
block decomposition
\begin{equation}\label{eq:A-block}
\mathcal{A}=
\begin{pmatrix}
\mathcal{A}_e & \mathcal{B}\\
\mathcal{C} & \mathcal{A}_m
\end{pmatrix},
\end{equation}
where
\[
\mathcal{A}_e=
\begin{pmatrix}
0 & -I\\
\bar{\mu}_1 \partial_x^4 & \gamma_1 I
\end{pmatrix}
\oplus
\begin{pmatrix}
0 & -I\\
-\bar{\mu}_2 \partial_x^2 & \gamma_2 I
\end{pmatrix},\qquad
\mathcal{A}_m=
\begin{pmatrix}
\partial_s & 0\\
0 & \partial_s
\end{pmatrix},
\]
the operator \(\mathcal{B}:\mathcal{H}_m\to\mathcal{H}_e\) is the bounded
operator that maps the memory variables into the elastic equations through the
convolutions
\[
(\sigma,\eta)\mapsto
\Bigl(B_1\sigma,B_2\eta\Bigr),
\]
with
\[
B_1\sigma:=\int_0^{+\infty} g_1(s)\sigma_{xx}(s)\,ds\in L^2(\Omega),
\qquad
B_2\eta:=\int_0^{+\infty} g_2(s)\eta_x(s)\,ds\in L^2(\Omega),
\]
and the operator \(\mathcal{C}:\mathcal{H}_e\to\mathcal{H}_m\) is the
coupling induced by the transport equations, namely
\[
\mathcal{C}(u,\psi,v,\phi)=( -\psi, -\phi).
\]
Here the integrals defining \(B_1\) and \(B_2\) are understood as Bochner
integrals in \(L^2(\Omega)\); this is consistent with the definition of
\(\mathcal{D}(\mathcal{A})\), in which the fourth-order term is defined only
for the combined expression
\(\bar{\mu}_1 u+\int_0^{+\infty} g_1(s)\sigma(s)\,ds\). The operator
\(\mathcal{B}\) is bounded from \(\mathcal{H}_m\) to \(\mathcal{H}_e\) because
of Assumption \ref{assumg}, while \(\mathcal{C}\) is clearly bounded from
\(\mathcal{H}_e\) to \(\mathcal{H}_m\). Note that both \(\mathcal{B}\) and
\(\mathcal{C}\) are nonzero: \(\mathcal{B}\) is precisely the feedback of the
memory variables into the elastic equations, whereas \(\mathcal{C}\) is the
coupling from the elastic velocities into the transport equations. The latter
coupling is not treated through the block decomposition below; instead, it is
handled separately in Step 5, where the memory components are represented
explicitly by solving the transport equations.

Using the block decomposition \eqref{eq:A-block}, the elastic component
\(\Xi(t)\) of \(e^{-\mathcal{A} t}\Upsilon\) with
\(\Upsilon=(\Upsilon_e,0)\) satisfies the Duhamel formula
\begin{equation}\label{eq:block-Duhamel}
\Xi(t)
=
e^{-\mathcal{A}_e t}\Upsilon_e
+
\int_0^t
e^{-\mathcal{A}_e(t-\tau)}\mathcal{B}\,\Theta(\tau)\,d\tau,
\end{equation}
where \(\Theta(\tau)\) denotes the memory component of
\(e^{-\mathcal{A}\tau}\Upsilon\). This representation is obtained by applying
the variation-of-constants formula to the elastic equation
\[
\partial_t \Xi+\mathcal{A}_e\Xi=\mathcal{B}\Theta.
\]

With this notation, the elastic part of \(\Psi_\sigma(U)\) can be written as
\begin{equation}\label{eq:Psi-elastic}
\Psi_{\sigma,e}(U)
=\int_0^{T-\sigma} e^{-\mathcal{A}_e(T-s-\sigma)} F_e(S(s)U)\,ds
+\mathcal{R}_\sigma(U),
\end{equation}
where \(F_e\) denotes the second and fifth components of \(F\), and
\[
\mathcal{R}_\sigma(U)
=
\int_0^{T-\sigma}
e^{-\mathcal{A}_e(T-s-\sigma)}
\mathcal{B}\,\Theta(s)\,ds,
\]
with \(\Theta(s)\) the memory component of
\(e^{-\mathcal{A}(s-\tau)}F(S(\tau)U)\) for some \(\tau\in[0,T-\sigma]\). We
shall prove that \(\mathcal{R}_\sigma(\mathcal{B}_0)\) is relatively compact in
\(\mathcal{H}_e\).

The operator \(\mathcal{A}_e\) is sectorial with compact resolvent, and
\(-\mathcal{A}_e\) generates an analytic semigroup on \(\mathcal{H}_e\).
Consequently, the following smoothing estimates hold: for every \(t>0\),
\begin{equation}\label{eq:elastic-smoothing}
\|e^{-\mathcal{A}_e t} V\|_{\mathcal{D}(\mathcal{A}_e^\theta)}
\le C_0 t^{-\theta} \|V\|_{\mathcal{H}_e},
\qquad 0<\theta<1,
\end{equation}
and
\begin{equation}\label{eq:elastic-smoothing-2}
\|e^{-\mathcal{A}_e t} V\|_{\mathcal{D}(\mathcal{A}_e)}
\le C_1 t^{-1} \|V\|_{\mathcal{H}_e},
\qquad t>0,
\end{equation}
for some constants \(C_0,C_1>0\) depending only on \(\mathcal{A}_e\).

Since \(F_e(S(s)U)\in L^2(\Omega)\times L^2(\Omega)\), regarded as an element
of \(\mathcal{H}_e\) with only the second and fifth components nonzero, we have
the continuous embedding \(\mathcal{H}_e\hookrightarrow
\mathcal{D}(\mathcal{A}_e^{-1/2})\). More precisely, there exists a constant
\(C_2>0\) such that
\[
\|V\|_{\mathcal{D}(\mathcal{A}_e^{-1/2})}\le C_2\|V\|_{\mathcal{H}_e},
\qquad \forall V\in\mathcal{H}_e.
\]
Therefore, using \eqref{eq:elastic-smoothing} with \(\theta=1/2\), we obtain
\begin{equation}\label{eq:elastic-smoothing-L2}
\|e^{-\mathcal{A}_e t} V\|_{\mathcal{H}_e}
\le C_0 C_2 t^{-1/2} \|V\|_{\mathcal{H}_e},
\qquad t>0,
\end{equation}
for every \(V\in\mathcal{H}_e\).

Using \eqref{eq:elastic-smoothing-L2} with \(t=T-s-\sigma\), and the uniform
boundedness \eqref{eq:M-bound} of \(F_e(S(s)U)\) in
\(\mathcal{H}_e\), we obtain
\begin{equation}\label{eq:Psi-elastic-bound}
\begin{aligned}
\Bigl\|\int_0^{T-\sigma} e^{-\mathcal{A}_e(T-s-\sigma)} F_e(S(s)U)\,ds\Bigr\|_{\mathcal{H}_e}
&\le C_0 C_2 \int_0^{T-\sigma} (T-s-\sigma)^{-1/2} \|F_e(S(s)U)\|_{\mathcal{H}_e}\,ds\\
&\le C_0 C_2 M \int_0^{T-\sigma} (T-s-\sigma)^{-1/2}\,ds
\le C,
\end{aligned}
\end{equation}
where \(C\) depends only on \(T,\sigma,M\) and the constants in the smoothing
estimate. Moreover, applying \(e^{-\mathcal{A}_e\sigma}\) to the integral
term and using \eqref{eq:elastic-smoothing-2} gives
\begin{equation}\label{eq:Psi-elastic-regularized}
\Bigl\|e^{-\mathcal{A}_e\sigma}
\int_0^{T-\sigma} e^{-\mathcal{A}_e(T-s-\sigma)} F_e(S(s)U)\,ds
\Bigr\|_{\mathcal{D}(\mathcal{A}_e)}
\le C_1 \sigma^{-1} C.
\end{equation}
Thus
\[
\Bigl\{e^{-\mathcal{A}_e\sigma}
\int_0^{T-\sigma} e^{-\mathcal{A}_e(T-s-\sigma)} F_e(S(s)U)\,ds:
U\in\mathcal{B}_0\Bigr\}
\]
is bounded in \(\mathcal{D}(\mathcal{A}_e)\). Since \(\mathcal{A}_e\) has
compact resolvent, \(\mathcal{D}(\mathcal{A}_e)\) is compactly embedded in
\(\mathcal{H}_e\). Therefore this set is relatively compact in
\(\mathcal{H}_e\).

It remains to show that \(\mathcal{R}_\sigma(\mathcal{B}_0)\) is relatively
compact in \(\mathcal{H}_e\). We first prove that the coupling operators
\(B_1:\mathcal{M}_{g_1}\to L^2(\Omega)\) and
\(B_2:\mathcal{M}_{g_2}\to L^2(\Omega)\) are compact.

Consider \(B_1\). For \(R>0\), write
\[
B_1\sigma
=
\int_0^R g_1(s)\sigma_{xx}(s)\,ds
+
\int_R^{+\infty} g_1(s)\sigma_{xx}(s)\,ds
=:B_1^R\sigma+B_1^{\perp,R}\sigma.
\]
The tail is estimated by
\[
\|B_1^{\perp,R}\sigma\|_{L^2(\Omega)}
\le
\Bigl(\int_R^{+\infty} g_1(s)\,ds\Bigr)^{1/2}
\|\sigma\|_{\mathcal{M}_{g_1}},
\]
which tends to zero as \(R\to{+\infty}\), uniformly in \(\sigma\), by Assumption
\ref{assumg}. For the truncated part, the map
\[
\sigma\mapsto B_1^R\sigma
=
\int_0^R g_1(s)\sigma_{xx}(s)\,ds
\]
is a Hilbert--Schmidt-type integral operator from
\(L^2_{g_1}(0,R;L^2(\Omega))\) to \(L^2(\Omega)\): indeed, its kernel is
\(g_1(s)I\), and
\[
\int_0^R\int_\Omega g_1(s)^2\,dx\,ds
\le
g_1(0)\int_0^R g_1(s)\,ds<{+\infty}.
\]
Hence \(B_1^R\) is compact. Since \(B_1^{\perp,R}\to0\) in operator norm as
\(R\to{+\infty}\), it follows that \(B_1\) is compact. The same argument applies
to \(B_2\). Therefore \(\mathcal{B}:\mathcal{H}_m\to\mathcal{H}_e\) is compact.

Since \(e^{-\mathcal{A}_e(T-s-\sigma)}\) is bounded and \(\Theta(\mathcal{B}_0)\)
is bounded in \(\mathcal{H}_m\) by \eqref{eq:M-bound}, the contribution
\(\mathcal{R}_\sigma(\mathcal{B}_0)\) is relatively compact in \(\mathcal{H}_e\).

Combining the two compactness facts, we conclude that
\[
\{e^{-\mathcal{A}_e\sigma}\Psi_{\sigma,e}(U):U\in\mathcal{B}_0\}
\]
is relatively compact in \(\mathcal{H}_e\).

\medskip
\noindent\textbf{Step 5: Compactness of the memory components.}
Now we consider the memory components \(\Psi_\sigma^\sigma\) and
\(\Psi_\sigma^\eta\). We treat only \(\Psi_\sigma^\sigma\); the argument for
\(\Psi_\sigma^\eta\) is identical.

\smallskip
\noindent\textbf{5.1. Precise representation of the memory component.}
Recall that
\[
K_T(U)=\int_0^T e^{-\mathcal{A}(T-s)}F(S(s)U)\,ds,
\]
and that \(F(U)\) has zero memory components. Write
\[
e^{-\mathcal{A} t}\Upsilon
=\bigl(e^{-\mathcal{A} t}\Upsilon\bigr)_e
+\bigl(e^{-\mathcal{A} t}\Upsilon\bigr)_m,
\]
where the subscripts \(e\) and \(m\) denote the elastic and memory components,
respectively. Since \(F(S(s)U)\) has zero memory components, the memory
component of \(K_T(U)\) is generated solely by the elastic velocity through the
transport equations.

Using the block decomposition \eqref{eq:A-block} and the Duhamel
representation \eqref{eq:block-Duhamel}, for
\(\Upsilon=(\Upsilon_e,0)\in\mathcal{H}\) with \(\Upsilon_m=0\), the memory
component of \(e^{-\mathcal{A} t}\Upsilon\) solves
\[
\partial_t \Theta+\partial_s\Theta=\mathcal{C}_{\mathrm{tr}} \Xi(t),
\qquad \Theta(0,s)=0,
\]
where \(\Xi(t)\) is the elastic component of \(e^{-\mathcal{A} t}\Upsilon\) and
\(\mathcal{C}_{\mathrm{tr}}\) denotes the trace coupling induced by the
transport equations, namely
\[
\mathcal{C}_{\mathrm{tr}}\Xi(t)=(-\psi_e(t),-\phi_e(t)).
\]
More explicitly, if
\[
\Xi(t)=
\bigl(u_e(t),\psi_e(t),v_e(t),\phi_e(t)\bigr),
\]
then the memory component is
\[
\Theta(t,s)=
\bigl(\Theta^\sigma(t,s),\Theta^\eta(t,s)\bigr),
\]
where
\[
\partial_t\Theta^\sigma+\partial_s\Theta^\sigma=-\psi_e(t),
\qquad
\partial_t\Theta^\eta+\partial_s\Theta^\eta=-\phi_e(t),
\]
with zero initial and boundary conditions. The minus sign is consistent with
the fact that \(\sigma_t+\sigma_s-u_t=0\) and \(\eta_t+\eta_s-v_t=0\).

For the component \(\Psi_\sigma^\sigma\), we therefore have, for each fixed
source time \(\tau\in[0,T-\sigma]\),
\[
\Theta^\sigma(t,s;\tau)
=
-\int_{\max\{0,t-s\}}^{t}
\psi_e(r;\tau)\,dr,
\]
where
\[
\psi_e(r;\tau)
:=
\bigl[e^{-\mathcal{A}_e (r-\tau)}F_e(S(\tau)U)\bigr]_2,
\qquad r\ge\tau,
\]
and \(\psi_e(r;\tau)=0\) for \(r<\tau\). Consequently,
\begin{equation}\label{eq:Psi-memory-precise}
\Psi_\sigma^\sigma(U)(s)
=
-\int_0^{T-\sigma}
\int_{\max\{0,T-\sigma-\tau-s\}}^{T-\sigma-\tau}
\psi_e(r;\tau)
\,dr\,d\tau.
\end{equation}
Now change only the \emph{outer} time variable by setting
\(\theta=T-\sigma-\tau\). Then the inner time variable \(r\) is unchanged,
and the source term is evaluated at \(\tau=T-\sigma-\theta\). Thus
\begin{equation}\label{eq:Psi-memory-precise-2}
\Psi_\sigma^\sigma(U)(s)
=
-\int_0^{T-\sigma}
\int_{\max\{0,\theta-s\}}^{\theta}
\psi_{\rm src}(r,\theta)\,dr\,d\theta,
\end{equation}
with the \emph{two-variable} source
\begin{equation}\label{eq:psi-src-def}
\psi_{\rm src}(r,\theta)
:=
\bigl[e^{-\mathcal{A}_e (\theta-r)}F_e(S(T-\sigma-\theta)U)\bigr]_2,
\qquad 0\le r\le \theta\le T-\sigma.
\end{equation}
This is the precise form that we shall use. Note that
\(\psi_{\rm src}\) genuinely depends on both variables \(r\) and \(\theta\);
the inner variable \(r\) is not eliminated. The additional contribution of the
memory feedback \(\mathcal{B}\Theta\) to the elastic component has already been
handled in Step 4 through the term \(\mathcal{R}_\sigma(U)\).

\smallskip
\noindent\textbf{5.2. Uniform boundedness of \(\psi_{\rm src}\).}
We first establish uniform bounds for \(\psi_{\rm src}\).

\begin{lemma}\label{lem:psi-src-bound}
There exists a constant \(C_1>0\), independent of \(U\in\mathcal{B}_0\), such
that
\[
\sup_{0\le r<\theta\le T-\sigma}
(\theta-r)^{1/2}\|\psi_{\rm src}(r,\theta)\|_{H_a^2(\Omega)}
\le C_1,
\]
and
\[
\sup_{0\le \theta\le T-\sigma}\|\psi_{\rm src}(\theta,\theta)\|_{L^2(\Omega)}\le C_1.
\]
\end{lemma}

\begin{proof}
By definition,
\[
\psi_{\rm src}(r,\theta)
=
\bigl[e^{-\mathcal{A}_e (\theta-r)}F_e(S(T-\sigma-\theta)U)\bigr]_2,
\qquad 0\le r\le \theta\le T-\sigma.
\]
Since \(U\in\mathcal{B}_0\), we have \(\|S(s)U\|_{\mathcal{H}}\le R_*\) for
\(0\le s\le T\), by \eqref{eq:R1}. Since \(F\) is locally Lipschitz and maps
bounded sets into bounded sets, there exists \(M>0\) such that
\[
\|F_e(S(s)U)\|_{\mathcal{H}_e}\le M,
\qquad 0\le s\le T.
\]
For \(r<\theta\), the analytic smoothing estimate \eqref{eq:elastic-smoothing}
with \(\theta=1/2\) yields
\[
\|e^{-\mathcal{A}_e (\theta-r)}F_e(S(T-\sigma-\theta)U)\|_{\mathcal{D}(\mathcal{A}_e^{1/2})}
\le C_0 (\theta-r)^{-1/2}M.
\]
Since \(\mathcal{D}(\mathcal{A}_e^{1/2})\hookrightarrow H_a^2(\Omega)\) for the
second component, we obtain
\[
(\theta-r)^{1/2}\|\psi_{\rm src}(r,\theta)\|_{H_a^2(\Omega)}\le C_0 M.
\]
For \(r=\theta\), we have \(\psi_{\rm src}(\theta,\theta)=[F_e(S(T-\sigma-\theta)U)]_2\),
which is uniformly bounded in \(L^2(\Omega)\) by \(M\). This proves the lemma.
\end{proof}

\smallskip
\noindent\textbf{5.3. Uniform boundedness of \(\Psi_\sigma^\sigma(U)\) in
\(\mathcal{M}_{g_1}\).}

\begin{lemma}\label{lem:memory-bound}
There exists a constant \(C_2>0\), independent of \(U\in\mathcal{B}_0\), such
that
\[
\|\Psi_\sigma^\sigma(U)\|_{\mathcal{M}_{g_1}}\le C_2.
\]
\end{lemma}

\begin{proof}
From \eqref{eq:Psi-memory-precise-2},
\[
\Psi_\sigma^\sigma(U)(s)
=
-\int_0^{T-\sigma}
\int_{\max\{0,\theta-s\}}^{\theta}
\psi_{\rm src}(r,\theta)\,dr\,d\theta.
\]
Fix \(s\ge0\). We split the inner integral into the region
\(\theta-r\ge\delta\) and the region \(\theta-r<\delta\), where \(\delta>0\)
is a small parameter to be chosen. On the region \(\theta-r\ge\delta\),
Lemma \ref{lem:psi-src-bound} gives
\[
\|\psi_{\rm src}(r,\theta)\|_{H_a^2(\Omega)}\le C_1\delta^{-1/2},
\]
so the contribution of this region to
\(\|\Psi_\sigma^\sigma(U)(s)\|_{H_a^2(\Omega)}\) is bounded by
\[
C_1\delta^{-1/2}\int_0^{T-\sigma}\int_{\max\{0,\theta-s\}}^\theta\,dr\,d\theta
\le C_1\delta^{-1/2}(T-\sigma)^2.
\]
On the region \(\theta-r<\delta\), we use the endpoint bound
\(\|\psi_{\rm src}(\theta,\theta)\|_{L^2(\Omega)}\le C_1\) and the
integrability of \((\theta-r)^{-1/2}\) to obtain
\[
\Bigl\|\int_0^{T-\sigma}\int_{\max\{0,\theta-s\}}^\theta
\mathbf 1_{\{\theta-r<\delta\}}\psi_{\rm src}(r,\theta)\,dr\,d\theta\Bigr\|_{H_a^2(\Omega)}
\le C_1(T-\sigma)\delta^{1/2}.
\]
Choosing \(\delta=(T-\sigma)^{-2}\) gives
\[
C_1\delta^{-1/2}(T-\sigma)^2=C_1(T-\sigma)^3,
\qquad
C_1(T-\sigma)\delta^{1/2}=C_1,
\]
so that
\[
\|\Psi_\sigma^\sigma(U)(s)_{xx}\|
\le
C_1(1+(T-\sigma)^3)
\]
for all \(s\ge0\). By Poincar\'e's inequality in \(H_a^2(\Omega)\), there
exists a constant \(c>0\) such that
\[
\|w\|_{L^2(\Omega)}\le c\|w_{xx}\|_{L^2(\Omega)},
\qquad
w\in H_a^2(\Omega).
\]
Hence
\[
\|w\|_{H_a^2(\Omega)}^2
=
\|w\|_{L^2(\Omega)}^2+\|w_{xx}\|_{L^2(\Omega)}^2
\le
(1+c^2)\|w_{xx}\|_{L^2(\Omega)}^2,
\]
so that
\[
\|\Psi_\sigma^\sigma(U)(s)\|_{H_a^2(\Omega)}
\le
c_a\|\Psi_\sigma^\sigma(U)(s)_{xx}\|
\le
c_a C_1(1+(T-\sigma)^3),
\qquad
c_a:=\sqrt{1+c^2},
\]
for all \(s\ge0\). Consequently,
\[
\|\Psi_\sigma^\sigma(U)\|_{\mathcal{M}_{g_1}}^2
=
\int_0^{+\infty} g_1(s)\|\Psi_\sigma^\sigma(U)(s)_{xx}\|^2\,ds
\le
C_1^2(1+(T-\sigma)^3)^2
\int_0^{+\infty} g_1(s)\,ds.
\]
Since \(\int_0^{+\infty} g_1(s)\,ds=\beta_1<{+\infty}\), the claim follows with
\[
C_2:=C_1(1+(T-\sigma)^3)\sqrt{\beta_1}.
\]
\end{proof}

\smallskip
\noindent\textbf{5.4. Tail smallness.}

\begin{lemma}\label{lem:memory-tail}
For every \(\varepsilon>0\), there exists \(R>0\), depending only on
\(\varepsilon,T,\sigma,g_1\), such that
\[
\sup_{U\in\mathcal{B}_0}
\|\Psi_{\sigma,R}^{\sigma,\perp}(U)\|_{\mathcal{M}_{g_1}}
<\varepsilon,
\]
where \(\Psi_{\sigma,R}^{\sigma,\perp}(U)\) is the restriction of
\(\Psi_\sigma^\sigma(U)\) to \(s>R\).
\end{lemma}

\begin{proof}
From Lemma \ref{lem:memory-bound}, we have
\[
\|\Psi_\sigma^\sigma(U)(s)_{xx}\|
\le C_1(1+(T-\sigma)^3)
\]
for all \(s\ge0\), and this bound is independent of \(s\). Using
\(g_1(s)\le g_1(0)e^{-\lambda_1 s}\), we obtain
\[
\begin{aligned}
\|\Psi_{\sigma,R}^{\sigma,\perp}(U)\|_{\mathcal{M}_{g_1}}^2
&=
\int_R^{+\infty} g_1(s)\|\Psi_\sigma^\sigma(U)(s)_{xx}\|^2\,ds\\
&\le
C_1^2(1+(T-\sigma)^3)^2
\int_R^{+\infty} g_1(s)\,ds\\
&\le
C_1^2(1+(T-\sigma)^3)^2
\frac{g_1(0)}{\lambda_1}e^{-\lambda_1 R}.
\end{aligned}
\]
Choosing \(R\) sufficiently large so that the right-hand side is less than
\(\varepsilon^2\), the claim follows.
\end{proof}

\smallskip
\noindent\textbf{5.5. H\"older continuity in \(s\).}

\begin{lemma}\label{lem:memory-equicont}
There exists a constant \(C_3>0\), independent of \(U\in\mathcal{B}_0\), such
that for all \(0\le s_1<s_2\),
\[
\|\Psi_\sigma^\sigma(U)(s_1)-\Psi_\sigma^\sigma(U)(s_2)\|_{H_a^2(\Omega)}
\le C_3|s_1-s_2|^{1/2}.
\]
\end{lemma}

\begin{proof}
From \eqref{eq:Psi-memory-precise-2},
\[
\Psi_\sigma^\sigma(U)(s)
=
-\int_0^{T-\sigma}
\int_{\max\{0,\theta-s\}}^{\theta}
\psi_{\rm src}(r,\theta)\,dr\,d\theta.
\]
For \(0\le s_1<s_2\), we have
\[
\Psi_\sigma^\sigma(U)(s_1)-\Psi_\sigma^\sigma(U)(s_2)
=
-\int_0^{T-\sigma}
\int_{\max\{0,\theta-s_1\}}^{\max\{0,\theta-s_2\}}
\psi_{\rm src}(r,\theta)\,dr\,d\theta.
\]
For fixed \(\theta\), the inner integral is over an interval of length at most
\(|s_1-s_2|\). Using Lemma \ref{lem:psi-src-bound}, we estimate
\[
\begin{aligned}
&\Bigl\|\int_{\max\{0,\theta-s_1\}}^{\max\{0,\theta-s_2\}}
\psi_{\rm src}(r,\theta)\,dr\Bigr\|_{H_a^2(\Omega)}\\
&\le
C_1\int_{\max\{0,\theta-s_1\}}^{\max\{0,\theta-s_2\}}
(\theta-r)^{-1/2}\,dr\\
&\le
C_1\bigl|(\theta-\max\{0,\theta-s_1\})^{1/2}
-(\theta-\max\{0,\theta-s_2\})^{1/2}\bigr|\\
&\le
C_1|s_1-s_2|^{1/2},
\end{aligned}
\]
where we used the elementary inequality
\(|a^{1/2}-b^{1/2}|\le|a-b|^{1/2}\) for \(a,b\ge0\), together with
\(|a-b|\le|s_1-s_2|\). Integrating over \(\theta\in[0,T-\sigma]\) gives
\[
\|\Psi_\sigma^\sigma(U)(s_1)-\Psi_\sigma^\sigma(U)(s_2)\|_{H_a^2(\Omega)}
\le
C_1(T-\sigma)|s_1-s_2|^{1/2}.
\]
This proves the lemma with \(C_3:=C_1(T-\sigma)\).
\end{proof}

\smallskip
\noindent\textbf{5.6. Compactness on the compact interval \([0,R]\).}

\begin{lemma}\label{lem:memory-compact-interval}
For every \(R>0\), the set
\[
\{\Psi_\sigma^\sigma(U)|_{[0,R]}:U\in\mathcal{B}_0\}
\]
is relatively compact in \(L^2([0,R];H_a^2(\Omega))\).
\end{lemma}

\begin{proof}
By Lemma \ref{lem:memory-equicont}, the family
\(\{\Psi_\sigma^\sigma(U)|_{[0,R]}:U\in\mathcal{B}_0\}\) is uniformly
H\"older continuous of order \(1/2\) with values in \(H_a^2(\Omega)\). By
Lemma \ref{lem:memory-bound}, it is uniformly bounded in
\(L^\infty([0,R];H_a^2(\Omega))\). Hence it is bounded in
\(H^{1/2}([0,R];H_a^2(\Omega))\). Since the embedding
\[
H^{1/2}([0,R];H_a^2(\Omega))\hookrightarrow L^2([0,R];H_a^2(\Omega))
\]
is compact (Rellich--Kondrachov), the set is relatively compact in
\(L^2([0,R];H_a^2(\Omega))\).
\end{proof}

\smallskip
\noindent\textbf{5.7. Overall compactness in \(\mathcal{M}_{g_1}\).}

\begin{proposition}\label{prop:memory-compact}
The set
\[
\{\Psi_\sigma^\sigma(U):U\in\mathcal{B}_0\}
\]
is relatively compact in \(\mathcal{M}_{g_1}\).
\end{proposition}

\begin{proof}
Let \(\varepsilon>0\). By Lemma \ref{lem:memory-tail}, choose \(R>0\) such
that
\[
\sup_{U\in\mathcal{B}_0}
\|\Psi_{\sigma,R}^{\sigma,\perp}(U)\|_{\mathcal{M}_{g_1}}
<\frac{\varepsilon}{2}.
\]
By Lemma \ref{lem:memory-compact-interval}, the set
\[
\{\Psi_\sigma^\sigma(U)|_{[0,R]}:U\in\mathcal{B}_0\}
\]
is relatively compact in \(L^2([0,R];H_a^2(\Omega))\). Hence there exist
finitely many functions \(V_1,\dots,V_N\in L^2([0,R];H_a^2(\Omega))\) such that
\[
\{\Psi_\sigma^\sigma(U)|_{[0,R]}:U\in\mathcal{B}_0\}
\subset
\bigcup_{j=1}^N B_{\varepsilon/(2\sqrt{g_1(0)})}(V_j),
\]
where the balls are taken in \(L^2([0,R];H_a^2(\Omega))\). Extend each \(V_j\)
by zero outside \([0,R]\), and regard it as an element of \(\mathcal{M}_{g_1}\).
For any \(U\in\mathcal{B}_0\), choose \(j\) such that
\[
\|\Psi_\sigma^\sigma(U)|_{[0,R]}-V_j\|_{L^2([0,R];H_a^2(\Omega))}
<\frac{\varepsilon}{2\sqrt{g_1(0)}}.
\]
Then
\[
\begin{aligned}
\|\Psi_\sigma^\sigma(U)-V_j\|_{\mathcal{M}_{g_1}}&\le
\|\Psi_{\sigma,R}^{\sigma}(U)-V_j\|_{\mathcal{M}_{g_1}}
+
\|\Psi_{\sigma,R}^{\sigma,\perp}(U)\|_{\mathcal{M}_{g_1}}\\
&\le
\sqrt{g_1(0)}\,
\|\Psi_\sigma^\sigma(U)|_{[0,R]}-V_j\|_{L^2([0,R];H_a^2(\Omega))}
+
\frac{\varepsilon}{2}
<
\varepsilon.
\end{aligned}
\]
Thus
\[
\{\Psi_\sigma^\sigma(U):U\in\mathcal{B}_0\}
\subset
\bigcup_{j=1}^N B_{\varepsilon}(V_j)
\]
in \(\mathcal{M}_{g_1}\). Hence the set is totally bounded, and since
\(\mathcal{M}_{g_1}\) is complete, it is relatively compact.
\end{proof}

The same argument applies to \(\Psi_\sigma^\eta\) in \(\mathcal{M}_{g_2}\).
Therefore the set
\[
\{\Psi_\sigma(U):U\in\mathcal{B}_0\}
\]
is relatively compact in \(\mathcal{H}\).

\medskip
\noindent\textbf{Step 6: Relative compactness of \(K_T^{(1)}(\mathcal{B}_0)\).}
Combining Steps 4 and 5, we obtain that
\[
\{\Psi_\sigma(U):U\in\mathcal{B}_0\}
\]
is relatively compact in \(\mathcal{H}\). Since \(e^{-\mathcal{A}\sigma}\) is a
bounded linear operator on \(\mathcal{H}\), it maps relatively compact sets
into relatively compact sets. Therefore
\[
K_T^{(1)}(\mathcal{B}_0)=e^{-\mathcal{A}\sigma}\Psi_\sigma(\mathcal{B}_0)
\]
is relatively compact in \(\mathcal{H}\).

\medskip
\noindent\textbf{Step 7: Relative compactness of \(K_T(\mathcal{B}_0)\).}
We have shown that for every \(\varepsilon>0\), there exists \(\sigma>0\) such
that
\begin{equation}\label{eq:K2-uniform-small}
\sup_{U\in\mathcal{B}_0}\|K_T^{(2)}(U)\|_{\mathcal{H}}<\varepsilon,
\end{equation}
and \(K_T^{(1)}(\mathcal{B}_0)\) is relatively compact in \(\mathcal{H}\).
Therefore \(K_T(\mathcal{B}_0)\) is relatively compact in \(\mathcal{H}\).
Indeed, let \(\{U_n\}\subset\mathcal{B}_0\) be any sequence. Since
\(K_T^{(1)}(\mathcal{B}_0)\) is relatively compact, there is a subsequence
\(\{U_{n_k}\}\) such that \(K_T^{(1)}(U_{n_k})\) converges in \(\mathcal{H}\).
Then
\[
\begin{aligned}
\|K_T(U_{n_k})-K_T(U_{n_l})\|_{\mathcal{H}}
&\le \|K_T^{(1)}(U_{n_k})-K_T^{(1)}(U_{n_l})\|_{\mathcal{H}}
+\|K_T^{(2)}(U_{n_k})\|_{\mathcal{H}}
+\|K_T^{(2)}(U_{n_l})\|_{\mathcal{H}}\\
&\le \varepsilon_{k,l}+2\varepsilon,
\end{aligned}
\]
where \(\varepsilon_{k,l}\to0\) as \(k,l\to{+\infty}\). Since
\(\varepsilon>0\) is arbitrary, \(\{K_T(U_{n_k})\}\) is Cauchy in
\(\mathcal{H}\), hence converges. Thus \(K_T(\mathcal{B}_0)\) is relatively
compact.

Combining \eqref{eq:W-split}, \eqref{eq:L-small}, and the compactness of
\(K_T(\mathcal{B}_0)\), we obtain \eqref{eq:qstab-ineq} with \(q<1/2\). This
completes the proof.
\end{proof}

\begin{remark}\label{rem:correction}
The above proof addresses the point in \cite[Proposition~4.1]{Mukiawa2025}
that was discussed earlier. In that work, starting from the difference estimate
\[
\|U^1(t)-U^2(t)\|_{\mathcal{H}}^2
\le c\|U_0^1-U_0^2\|_{\mathcal{H}}^2
+c_{\mathcal{B}}\int_0^t\bigl(\|u^1(s)-u^2(s)\|^2+\|v^1(s)-v^2(s)\|^2\bigr)\,ds,
\]
the non-decaying initial-difference term
\(c\|U_0^1-U_0^2\|_{\mathcal{H}}^2\) appears to be absorbed into an arbitrarily
small constant \(\epsilon\), a step that would seem to call for further
justification. Here, the exponential decay of the linear semigroup
\(e^{-\mathcal{A}T}\) provides the rigorous compression factor \(q<1\) in
\eqref{eq:L-small}, while the nonlinear remainder \(K_T\) is compact by the
smoothing of the elastic component and the exponential decay of the memory
kernels. Together, these ingredients yield a complete proof of
quasi-stability.
\end{remark}

\begin{proposition}\label{prop:asym-smooth}
Under Assumptions \ref{assumh} and \ref{assumg}, the solution semigroup
\(S(t)\) is asymptotically smooth in \(\mathcal{H}\).
\end{proposition}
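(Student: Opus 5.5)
The plan is to obtain Proposition \ref{prop:asym-smooth} directly from the Chueshov--Lasiecka criterion, Theorem \ref{thm:CL-asym}. That theorem has three hypotheses, and I will check them one at a time: continuity of $S(t)$ on $\mathcal{H}$, existence of a bounded absorbing set, and quasi-stability on that set.

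\textbf{Continuity.} For each fixed $t$, $S(t)$ is continuous on $\mathcal{H}$. This comes from the well-posedness recalled in Section \ref{sec:prelim} together with the local Lipschitz bound \eqref{eq:local-Lipschitz}. Take $U_0,V_0$ in a ball of radius $R$. On $[0,t]$ the orbits stay in some larger ball; this follows from the Gronwall argument of Step 1 in the proof of Proposition \ref{prop:qstab}, applied with $R_0$ replaced by $R$. Subtracting the two mild formulas \eqref{eq:mild-general} and using the contraction property of $e^{-\mathcal{A}t}$ from Lemma \ref{lem:monotone}, Gronwall gives
\[
\|S(t)U_0-S(t)V_0\|_{\mathcal{H}}\le e^{L t}\|U_0-V_0\|_{\mathcal{H}}.
\]
This is in fact Lipschitz continuity on bounded sets.

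\textbf{Absorbing set and quasi-stability.} The absorbing ball $\mathcal{B}_0$ comes from \eqref{eq:absorbing}, as proved in \cite[Lemma~4.2]{Mukiawa2025}. Proposition \ref{prop:qstab} then gives $T>0$, $q<1/2$ and a map $K_T$ with relatively compact range, such that \eqref{eq:qstab-ineq} holds on $\mathcal{B}_0$. With these three facts, Theorem \ref{thm:CL-asym} applies and yields asymptotic smoothness in the sense of Definition \ref{def:asym-smooth}.

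\textbf{Main obstacle.} The difficulty is matching the quasi-stability set with the sets required in Definition \ref{def:asym-smooth}(ii): the definition asks for compact $\omega$-limit sets of every bounded positively invariant set $B_0$, and such a $B_0$ need not lie inside $\mathcal{B}_0$. I would handle this in two steps.
\begin{enumerate}[(i)]
\item If the cited version of Theorem \ref{thm:CL-asym} does not already cover general $B_0$, I would replace $\mathcal{B}_0$ by the positively invariant bounded absorbing set $\mathcal{B}:=\overline{\bigcup_{t\ge t_0(\mathcal{B}_0)}S(t)\mathcal{B}_0}$. I would then rerun the proof of Proposition \ref{prop:qstab} on $\mathcal{B}$, or on any bounded set. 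That proof used only the boundedness of the initial set: Steps 1--7 depend on its radius only through $R_*$ and $M$. So quasi-stability holds on every bounded set, including $B_0$ itself.
\item With quasi-stability on $B_0$ in hand, I would apply the standard argument. For a sequence $S(t_n)x_n$ with $x_n\in B_0$ and $t_n\to\infty$, write $S(t_n)x_n=S(T)S(t_n-T)x_n$. Positive invariance keeps $S(t_n-T)x_n$ in $B_0$, so \eqref{eq:qstab-ineq} applies to any two terms of the sequence. Iterating it $m$ times bounds the Kuratowski measure of noncompactness of $S(mT)B_0$ by $q^m\operatorname{diam}B_0$, because at each step the $K_T$ part has relatively compact range.
\end{enumerate}
Letting $m\to\infty$ gives a measure of noncompactness tending to zero. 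Hence $\omega(B_0)$ is nonempty, compact, invariant and attracting, which is Definition \ref{def:asym-smooth}(ii).
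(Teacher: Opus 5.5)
Your proposal is correct and follows the paper's own argument, which simply combines the absorbing ball \eqref{eq:absorbing}, Proposition \ref{prop:qstab} and Theorem \ref{thm:CL-asym}. Your worry about positively invariant sets not contained in \(\mathcal{B}_0\) is legitimate: \(\mathcal{B}_0\) need not itself be positively invariant, and the paper passes over this point.

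Your fix can be shortened. The set \(\mathcal{B}=\overline{\bigcup_{t\ge t_0}S(t)\mathcal{B}_0}\) already lies in the closed ball \(\mathcal{B}_0\), so quasi-stability restricts to it without rerunning Proposition \ref{prop:qstab}. Any bounded \(B_0\) enters \(\mathcal{B}\) after some time \(t_1\), so \(\kappa(S(t)B_0)\le\kappa(S(t-t_1)\mathcal{B})\to0\), where \(\kappa\) is the Kuratowski measure.

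Separately, do not take the uniform orbit bounds you use for continuity from the Gronwall argument of Step 1. That argument is circular as written, because \(L_1\) depends on \(R_1\). Take these bounds instead from the nonincreasing energy \eqref{eq:energy}, which is coercive by \eqref{ass1.6}.
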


\begin{proof}
By \eqref{eq:absorbing}, \(S(t)\) possesses the bounded absorbing set
\(\mathcal{B}_0\). By Proposition \ref{prop:qstab}, \(S(t)\) is quasi-stable
on \(\mathcal{B}_0\) in the sense of Definition \ref{def:qstab}. Therefore,
by Theorem \ref{thm:CL-asym}, \(S(t)\) is asymptotically smooth in
\(\mathcal{H}\).
\end{proof}

\begin{proof}[Proof of Theorem \ref{Global attractor-precise}]
By \eqref{eq:absorbing}, \(S(t)\) is dissipative in \(\mathcal{H}\). By
Proposition \ref{prop:asym-smooth}, \(S(t)\) is asymptotically smooth in
\(\mathcal{H}\). Therefore, by the classical theorem of Chueshov--Lasiecka
\cite[Theorem~4.1]{Chueshov2008}, \(S(t)\) possesses a compact global attractor
\(\frA\subset\mathcal{H}\). This completes the proof.
\end{proof}


\begin{thebibliography}{99}

\bibitem{Ahmed1998}
N.~U. Ahmed and H.~Harbi,
\emph{Mathematical analysis of dynamic models of suspension bridges},
SIAM J. Appl. Math. \textbf{58} (1998), no.~3, 853--874.

\bibitem{An2002}
Y.~An,
\emph{Nonlinear perturbations of a coupled system of steady state suspension bridge equations},
Nonlinear Anal. \textbf{51} (2002), 1285--1292.

\bibitem{Bochicchio2010}
I.~Bochicchio, C.~Giorgi, and E.~Vuk,
\emph{Long-term damped dynamics of the extensible suspension bridge},
Int. J. Differ. Equ. \textbf{2010} (2010), Article ID 383420.

\bibitem{Chueshov2008}
I.~Chueshov and I.~Lasiecka,
\emph{Long-time behavior of second order evolution equations with nonlinear damping},
Mem. Amer. Math. Soc. \textbf{195} (2008), no.~912.

\bibitem{Dafermos1970}
C.~M. Dafermos,
\emph{An abstract Volterra equation with applications to linear viscoelasticity},
J. Differ. Equ. \textbf{7} (1970), 554--569.

\bibitem{DellOro2015}
F.~Dell'Oro, C.~Giorgi, and V.~Pata,
\emph{Asymptotic behavior of coupled linear systems modeling suspension bridges},
Z. Angew. Math. Phys. \textbf{66} (2015), 1095--1108.

\bibitem{Enyi2020}
C.~D. Enyi and S.~E. Mukiawa,
\emph{Dynamics of a thermoelastic-laminated beam problem},
AIMS Math. \textbf{5} (2020), no.~5, 5261--5286.

\bibitem{Gearhart1978}
L.~Gearhart,
\emph{Spectral theory for contraction semigroups on Hilbert space},
Trans. Amer. Math. Soc. \textbf{236} (1978), 385--394.

\bibitem{Glover1989}
J.~Glover, A.~C. Lazer, and P.~J. McKenna,
\emph{Existence and stability of large scale nonlinear oscillations in suspension bridges},
Z. Angew. Math. Phys. \textbf{40} (1989), 172--200.

\bibitem{Holubova2003}
G.~Holubov\'a and A.~Matas,
\emph{Initial-boundary value problem for the nonlinear string-beam system},
J. Math. Anal. Appl. \textbf{288} (2003), 784--802.

\bibitem{Kang2016}
J.~R. Kang,
\emph{Asymptotic behavior of the thermoelastic suspension bridge equation with linear memory},
Bound. Value Probl. \textbf{2016} (2016), Article ID 206.

\bibitem{Lazer1987}
A.~C. Lazer and P.~J. McKenna,
\emph{Large scale oscillatory behavior in loaded asymmetric systems},
Ann. Inst. H. Poincar\'e Anal. Nonlinear \textbf{4} (1987), 243--274.

\bibitem{Lazer1990}
A.~C. Lazer and P.~J. McKenna,
\emph{Large-amplitude periodic oscillations in suspension bridges: some new connections with nonlinear analysis},
SIAM Rev. \textbf{32} (1990), no.~4, 537--578.

\bibitem{Litcanu2004}
G.~Li\c{t}canu,
\emph{A mathematical model of suspension bridges},
Appl. Math. \textbf{49} (2004), no.~1, 39--55.

\bibitem{Ma2005}
Q.~Z. Ma and C.~K. Zhong,
\emph{Existence of global attractors for the coupled system of suspension bridge equations},
J. Math. Anal. Appl. \textbf{308} (2005), 365--379.

\bibitem{Ma2009}
Q.~Ma and C.~Zhong,
\emph{Existence of strong solutions and global attractors for the coupled suspension bridge equations},
J. Differ. Equ. \textbf{246} (2009), 3755--3775.

\bibitem{Malik2004}
J.~Malik,
\emph{Mathematical modelling of cable-stayed bridges: existence, uniqueness, continuous dependence on data, homogenization of cable systems},
Appl. Math. \textbf{49} (2004), no.~1, 1--38.

\bibitem{McKenna1987}
P.~J. McKenna and W.~Walter,
\emph{Nonlinear oscillations in a suspension bridge},
Arch. Ration. Mech. Anal. \textbf{98} (1987), no.~2, 167--177.

\bibitem{McKenna1990}
P.~J. McKenna and W.~Walter,
\emph{Travelling waves in a suspension bridge},
SIAM J. Appl. Math. \textbf{50} (1990), no.~3, 703--715.

\bibitem{McKenna1999}
P.~J. McKenna,
\emph{Torsional oscillations in suspension bridges revisited: fixing an old approximation},
Amer. Math. Monthly \textbf{106} (1999), 1--18.

\bibitem{McKenna2001}
P.~J. McKenna and C.~Tuama,
\emph{Large torsional oscillations in suspension bridges revisited: vertical forcing creates torsional response},
Amer. Math. Monthly \textbf{108} (2001), 738--745.

\bibitem{Messaoudi2016}
S.~A. Messaoudi, A.~Bonfoh, S.~E. Mukiawa, and C.~D. Enyi,
\emph{The global attractor for a suspension bridge with memory and partially hinged boundary conditions},
Z. Angew. Math. Mech. \textbf{97} (2017), no.~1, 1--14.

\bibitem{Mukiawa2020}
S.~E. Mukiawa and C.~D. Enyi,
\emph{Well-posedness and attractors for a memory-type thermoelastic Timoshenko beam acting on shear force},
Taiwan. J. Math. \textbf{24} (2020), no.~5, 1179--1204.

\bibitem{Mukiawa2025}
S.~E. Mukiawa and S.~A. Messaoudi,
\emph{On the well-posedness and long time dynamics for a coupled nonlinear bridge system with past history},
Appl. Math. Optim. \textbf{91} (2025), Article ID 53.

\bibitem{Park2011}
Y.~Park and J.~R. Kang,
\emph{Global attractors for the suspension bridge equations with nonlinear damping},
Q. Appl. Math. \textbf{69} (2011), 465--475.

\bibitem{Pazy1983}
A.~Pazy,
\emph{Semigroups of Linear Operators and Applications to Partial Differential Equations},
Applied Mathematical Sciences, vol.~44, Springer, New York, 1983.

\bibitem{Pruss1984}
J.~Pr\"uss,
\emph{On the spectrum of $C_0$-semigroups},
Trans. Amer. Math. Soc. \textbf{284} (1984), 847--857.

\bibitem{Zhong2007}
C.~K. Zhong, Q.~Z. Ma, and C.~Y. Sun,
\emph{Existence of strong solutions and global attractors for the suspension bridge equations},
Nonlinear Anal. \textbf{67} (2007), 442--454.

\end{thebibliography}
\end{document}